\def\R{\mathbb{R}}
\def\H{\mathbb{H}}
\def\E{\mathbb E}
\newtheorem{theorem}{{Theorem}}[section]
\newtheorem*{theorem*}{{Theorem}}
\newtheorem*{corollary*}{{Corollary}}
\newtheorem{proposition}[theorem]{{Proposition}}
\newtheorem{isom.ext}[theorem]{{Trivial isometric extension}}
\newtheorem{lemma}[theorem]{{Lemma}}
\newtheorem{corollary}[theorem]{{Corollary}}
\newtheorem{remark}[theorem]{{Remark}}
 \title[Flexibility and rigidity of conformal embeddings in Lorentzian manifolds]{Flexibility and rigidity of conformal embeddings in Lorentzian manifolds
 }
\author[Alaa Boukholkhal]{Alaa Boukholkhal}\thanks{The author is supported by the LABEX MILYON (ANR-10-LABX-0070) of
Université de Lyon, within the program "Investissements d'Avenir" (ANR-11-IDEX-0007) operated by the French National Research Agency (ANR)}
\address{UMPA, École normale supérieure de Lyon, France}
\email{mohamed.boukholkhal@ens-lyon.fr}
\begin{document}

\begin{center}
\begin{abstract} 
We prove that for any Riemannian metric $g$ on a closed orientable surface $\Sigma$ and any spacelike embedding $f:\Sigma \rightarrow M$ in a pseudo-Riemannian manifold $(M,h)$, the embedding $f$ can be $C^{0}$-approximated by a smooth conformal embedding for $g$. If in addition, $M$ is a quotient of the $(2+1)$-dimensional solid timelike cone by a cocompact lattice of $SO^{\circ}(2,1)$, we show that the set of negatively curved metrics on $\Sigma$ that admit isometric embeddings in $M$ projects into a relatively compact set in the Teichm\"uller space.
\end{abstract}
\end{center}

\maketitle

\section{Introduction}
 Conformal structures on a surface are in one to one correspondence with complex structures, also called Riemann surface structures. It is therefore natural to ask whether we can find for every Riemann surface a conformal model embedded in the Euclidean space $\E^{3}=(\R^{3},g_{euc})$. In other terms, for any closed orientable Riemannian surface $(\Sigma,g)$, is there a smooth embedding $f:\Sigma \rightarrow \E^{3}$, such that $f^{*}g_{euc}$ is conformal to $g$ (i.e. $f^{*}g_{euc} = \lambda g$ where $\lambda$ is a positive function on $\Sigma$)?

If we allow embeddings of regularity $C^{1}$, we can find isometric, and thus conformal, embeddings by the Nash-Kuiper theorem \cite{nash1954c1},\cite{kuiper1955c1}. For higher regularity, isometric embeddings do not always exist. For example, any $C^{2}$-embedded closed surface in $\E^{3}$ has a point with positive curvature. Therefore, there is no $C^{2}$-isometric embedding of flat tori or closed hyperbolic surfaces in $\E^{3}$.

All conformal structures on the sphere are equivalent by the uniformization theorem, and hence any embedding of the sphere is conformal. It is also known that every rectangular elliptic curve is conformal to a torus of revolution in $\E^{3}$. The first example of a conformally embedded non-rectangular torus was given by Teichm\"uller \cite{zbMATH03097094}. In the 1960s, Garsia proved that every closed Riemann surface can be smoothly conformally embedded in $\E^{3}$ \cite{zbMATH03215435},\cite{garsia1960imbedding}. In 1971, R\"uedy generalized Garsia's result to open Riemann surfaces \cite{ruedy1971embeddings}. These results were further generalized by Ko, who proved that any Riemann surface can be conformally embedded in any orientable Riemannian manifold of dimension $\geq 3$ \cite{ko1989embedding}, \cite{zbMATH01542084}. In this paper, we will consider the conformal embedding problem when the ambient space is pseudo-Riemannian. One obvious obstruction here is the existence of a spacelike embedding i.e. an embedding for which the induced metric is Riemannian. For instance, there are no spacelike embeddings of closed surfaces in the $(2+1)$-dimensional Minkowski space. We 
 will prove that this is the only obstruction, more precisely we have the following: 

\begin{theorem}\label{T1}
Let $(M,h)$ be a pseudo-Riemannian manifold of dimension $\ge 3$ and $f: \Sigma \rightarrow (M,h) $ a spacelike embedding of a closed orientable surface $\Sigma$. For any Riemannian metric $g$ on $\Sigma$, there exists a smooth spacelike embedding $F : \Sigma \rightarrow M$ such that:
    \begin{itemize}
        \item $F$ is conformal for the metric $g$. 
        \item $F$ is $C^{0}$-close to $f$.
    \end{itemize}

\end{theorem}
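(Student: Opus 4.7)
The plan is to realise $F$ as a perturbation of $f$ in a timelike normal direction, using a Nash-type corrugation in that direction to shrink the induced metric pointwise onto a prescribed conformal representative of $[g]$. If $(M,h)$ is Riemannian the theorem is Ko's result recalled in the introduction, so I assume $h$ has at least one negative direction. Then $h$ restricted to the normal bundle of the spacelike surface $f(\Sigma)$ inherits that negative direction, and (passing to an orientation double cover of $\Sigma$ if needed) I would choose a smooth unit timelike normal field $N$ along $f$ with $h(N,N)=-1$. Setting $g_0:=f^*h$ and using compactness of $\Sigma$, I pick $\lambda>0$ small enough that $Q:=g_0-\lambda g$ is a positive-definite symmetric $2$-tensor on $\Sigma$; the task then reduces to producing a smooth, spacelike, $C^0$-small perturbation $F$ of $f$ with $F^*h=\lambda g$, i.e.\ with $F^*h-g_0=-Q$.

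The key local computation is that the map $F_u(p):=\exp_{f(p)}(u(p)N(p))$ has induced metric
\[
F_u^*h \;=\; g_0 \;-\; du\otimes du \;+\; O(u),
\]
as one sees in Fermi coordinates adapted to $N$, using $h(N,df)=0$ and $h(N,N)=-1$. Perturbing $f$ along the timelike normal therefore \emph{subtracts} $du^{\otimes 2}$ from the induced metric to leading order---opposite to the effect of a spacelike normal perturbation in the Riemannian case, and precisely what we need. I would write $Q$ as a finite sum $\sum_{i=1}^{k} a_i^2\, d\psi_i\otimes d\psi_i$ of rank-one positive squares (using a partition of unity on $\Sigma$ subordinate to charts in which the relevant $1$-forms are exact) and apply to each summand a high-frequency corrugation $u_{i,\varepsilon}=\sqrt{2}\,\varepsilon\, a_i\sin(\psi_i/\varepsilon)$. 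The $C^0$-cost of each such step is $O(\varepsilon)$, while the induced-metric correction, after the convex-integration average $\cos^2(\psi_i/\varepsilon)\to \tfrac{1}{2}$, is exactly $-a_i^2\, d\psi_i^{\otimes 2}$. Iterating with diminishing amplitudes and increasing frequencies in Nash--Kuiper style yields a $C^0$-convergent sequence of smooth spacelike maps whose limit $F$ satisfies $F^*h=\lambda g$, and so is conformal for $g$.

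The main obstacle is executing this convex-integration argument in the smooth category: classical Nash--Kuiper corrugation only yields $C^1$ regularity, whereas the statement demands a $C^\infty$ map with the conformality condition holding \emph{exactly}. The freedom to adjust the conformal factor $\lambda$ at each stage of the iteration provides extra flexibility missing from the isometric embedding problem and should absorb the leftover errors, while spacelikeness is maintained because the amplitudes are kept small relative to $g_0$ so that the perturbed induced metric stays positive-definite throughout. Globalising the locally defined phase functions $\psi_i$ through the partition of unity is a technical but essentially routine bookkeeping step.
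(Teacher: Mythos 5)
Your local mechanism is essentially the paper's: perturb $f$ in a timelike normal direction so that the induced metric \emph{decreases}, decompose the metric defect $Q=f^*h-\lambda g$ into primitive squares $\sum_i a_i^2\, d\psi_i\otimes d\psi_i$, and kill each square by a high-frequency corrugation. Up to that point you and the paper agree (the paper uses Theilli\`ere's corrugation process with the loop family $r(\cosh\theta\,\mathbf{u}+\sinh\theta\,\mathbf{n})$ rather than a pure normal oscillation, but both devices produce, after finitely many steps, a smooth spacelike embedding $F$ with $\|F^*h-\lambda g\|_{C^0}\le\varepsilon$ at $C^0$-distance $O(1/N)$ from $f$).

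The gap is in the last step, and you have correctly located it yourself: passing from $\varepsilon$-conformal to exactly conformal. Your proposed fix --- iterate in Nash--Kuiper style with diminishing amplitudes and let ``the freedom to adjust the conformal factor \ldots absorb the leftover errors'' --- does not work as stated. If you iterate the corrugations to convergence, the limit is only $C^1$: each stage injects derivatives of size comparable to the remaining defect times the corrugation frequency, which is precisely why Nash--Kuiper loses regularity, and no choice of conformal factor changes this. If instead you stop after finitely many stages, the map is smooth but the residual $O(1/N)$ error is an essentially arbitrary small symmetric $2$-tensor, not a multiple of $g$, so the map is only approximately conformal. The paper's resolution is a genuinely different idea: run the finite, smooth corrugation construction simultaneously for a continuous family of target conformal structures $q$ ranging over a closed ball $B_\rho$ in Teichm\"uller space centered at $[g]$ (parametrized by actual metrics via the flat-torus normalization in genus one and the Fischer--Tromba slice in higher genus), obtaining a continuous map $G:B_\rho\to Teich(\Sigma)$, $G(q)=[F_q^*h]$, which displaces every point by at most $\rho$ once the corrugation numbers are large. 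The Brouwer-type Proposition \ref{B} then yields $q_1$ with $G(q_1)=[g]$, i.e.\ a smooth embedding whose induced metric lies \emph{exactly} in the conformal class of $g$ (with a positive function, not a constant, as conformal factor --- note that your target $F^*h=\lambda g$ with $\lambda$ constant is strictly stronger than conformality and is not what the theorem requires). This fixed-point argument in Teichm\"uller space is the essential content missing from your proposal.
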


\begin{remark}
    The $C^{0}$-closeness in the theorem is stated for any choice of a Riemannian metric on $M$. 
\end{remark}

\medskip
One important case where this theorem can be applied is for quotients of the open solid timelike cone $\mathcal{T}$ of dimension $2+1$ .i.e
$$\mathcal{T}= \{ (x,y,z)\in \mathbb{R}^{2,1} |  \; x^{2}+y^{2} < z^{2} \; ; \; z> 0 \}$$ 
Indeed, for any lattice $\Gamma$ of $SO^{\circ}(2,1)$ isomorphic to $\pi_1(\Sigma)$, where $\Sigma$ is a closed surface of genus $\geq 2$, we can consider the quotient $\mathcal{T}/\Gamma$. We have a natural isometric embedding of $\Sigma$ with a hyperbolic metric $h_\Gamma$ realized as the quotient of the hyperboloid $\H^{2}$ by $\Gamma$, in particular, $\Sigma$ admits a spacelike embedding in  $\mathcal{T}/\Gamma$. Applying Theorem \ref{T1} in this context, we get: 
\begin{corollary}
    Let $\Sigma$ be a closed surface of genus $\geq 2$, and let $\Gamma\leq SO^{\circ}(2,1)$ a lattice isomorphic to $\pi_1(\Sigma)$. Then, any Riemannian metric on $\Sigma$ admits a smooth conformal embedding in $\mathcal{T}/\Gamma$.
\end{corollary}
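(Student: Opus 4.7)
The plan is to reduce the corollary to a direct application of Theorem \ref{T1}. The hypothesis on $\Gamma$ produces a canonical spacelike embedding $f_{0} : \Sigma \hookrightarrow \mathcal{T}/\Gamma$, after which Theorem \ref{T1} supplies, for any Riemannian metric $g$ on $\Sigma$, a smooth $g$-conformal embedding $C^{0}$-close to $f_{0}$.

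First, I would set up the reference spacelike embedding. The upper hyperboloid
\[
\H^{2} = \{(x,y,z) \in \mathcal{T} : -x^{2} - y^{2} + z^{2} = 1\}
\]
is a spacelike hypersurface of $\mathbb{R}^{2,1}$ whose induced metric is the hyperbolic metric of constant curvature $-1$. Since $\Gamma$ is isomorphic to $\pi_{1}(\Sigma)$ with $\Sigma$ closed of genus $\geq 2$, it is torsion-free and cocompact in $SO^{\circ}(2,1)$, so it acts freely and properly discontinuously on $\H^{2}$ with quotient a closed hyperbolic surface diffeomorphic to $\Sigma$. The radial trivialization $\mathcal{T} \cong \H^{2} \times (0,\infty)$, sending $v$ to $(v/\sqrt{\langle v,v\rangle},\sqrt{\langle v,v\rangle})$, is $SO^{\circ}(2,1)$-equivariant with trivial action on the second factor; I would use this to deduce that $\Gamma$ acts freely and properly discontinuously on $\mathcal{T}$ as well. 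Hence $\mathcal{T}/\Gamma$ is a smooth Lorentzian manifold, and the inclusion $\H^{2} \hookrightarrow \mathcal{T}$ descends to a spacelike embedding $f_{0} : \Sigma \cong \H^{2}/\Gamma \hookrightarrow \mathcal{T}/\Gamma$.

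Second, given an arbitrary Riemannian metric $g$ on $\Sigma$, I would apply Theorem \ref{T1} to the pseudo-Riemannian manifold $\mathcal{T}/\Gamma$, the spacelike embedding $f_{0}$, and the metric $g$. The theorem produces a smooth embedding $F : \Sigma \to \mathcal{T}/\Gamma$ that is conformal for $g$ (and $C^{0}$-close to $f_{0}$), which is precisely the conclusion of the corollary.

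There is no substantive obstacle beyond Theorem \ref{T1} itself: the real content of the corollary is simply the observation that the ``spacelike embedding'' hypothesis of Theorem \ref{T1} is automatically met in $\mathcal{T}/\Gamma$ via the canonical hyperboloid model of $\H^{2}$. The only verifications one must make explicit are the product decomposition of $\mathcal{T}$ and the freeness and properness of the $\Gamma$-action, both of which reduce to standard facts about cocompact Fuchsian groups.
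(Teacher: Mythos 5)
Your proposal is correct and follows exactly the paper's own route: the paper likewise observes that the hyperboloid $\H^{2}/\Gamma$ gives a natural spacelike (indeed isometric for $h_\Gamma$) embedding of $\Sigma$ into $\mathcal{T}/\Gamma$ and then applies Theorem \ref{T1}. You merely spell out the standard verifications (freeness and properness of the $\Gamma$-action, the radial product structure of $\mathcal{T}$) that the paper leaves implicit.
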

While smooth conformal embeddings exhibit high flexibility in this setting, we will prove that smooth isometric embeddings remain rigid. Indeed, an immediate application of the maximum principle implies that $h_\Gamma$ is the only hyperbolic metric that can be smoothly embedded in $\mathcal{T}/\Gamma$. Moreover, in \cite{lab}, Labourie and Schlenker proved that for any negatively curved metric on $\Sigma$, there exists a unique quotient of $\mathcal{T}$, where this metric can be isometrically embedded (see \cite{fillastre}, \cite{Graham} for other variants of this result).

In view of this rigidity, it is natural to ask: for a fixed $\Gamma$, which negatively curved metrics can be isometrically embedded in $\mathcal{T}/\Gamma$? Such an embedding exists if and only if there is an equivariant isometric embedding of the universal cover of $\Sigma$ into $\mathcal{T}$. In particular, this embedding will be the graph of a convex function. The manifold $\mathcal{T}/\Gamma$ is a flat maximal globally hyperbolic spatially compact spacetime. These spacetimes have been classified by Mess in \cite{mess2007lorentz}, and the study of convex Cauchy surfaces in this context is the subject of many papers (see for instance \cite{zeghib}, \cite{seppi}).

  Although any conformal structure on $\Sigma$ has a model in $\mathcal{T}/\Gamma$ by Theorem \ref{T1}, we will show that this model can not be always negatively curved. More precisely, denote by:
\begin{itemize}
    \item $\mathcal{M}_{<0}$ the set of Riemannian metrics on $\Sigma$ of negative curvature.
    \item $\widetilde{B}(\Sigma)$ the set of metrics in $\mathcal{M}_{<0}$ that can be smoothly isometrically embedded in $\mathcal{T}/\Gamma$.
    \item $B(\Sigma)$ the projection of $\widetilde{B}(\Sigma)$ in the Teichm\"uller space $Teich(\Sigma)$ (i.e. space of Riemannian metrics on $\Sigma$ up to conformal scaling and up to diffeomorphisms isotopic to the identity).
\end{itemize}
\begin{theorem}\label{T2}
   Let $\mathcal{T}$ and $\Gamma$ be as defined above. Then there exists a constant $C > 1$ that depends only on $\Gamma$, such that for any metric $g\in \widetilde{B}(\Sigma)$, up to scaling $g$ by a constant factor, we have:  
    $$\frac{1}{C} h_\Gamma \leq g \leq C h_\Gamma.$$
\end{theorem}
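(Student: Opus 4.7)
The plan is to reformulate the isometric embedding as a radial graph problem over the hyperbolic plane and then exploit the negative curvature hypothesis to obtain a uniform bound on the slope. First I would lift an isometric embedding $f\colon(\Sigma,g)\hookrightarrow\mathcal{T}/\Gamma$ to a $\Gamma$-equivariant embedding $\tilde f\colon\widetilde\Sigma\to\mathcal{T}$. Because $\mathcal{T}$ retracts radially onto the hyperboloid $\H^{2}$ and any Cauchy surface of $\mathcal{T}$ meets each future-timelike ray in exactly one point, the image $\tilde f(\widetilde\Sigma)$ is automatically a radial graph: $\tilde f(\widetilde\Sigma)=\{\,r(v)\,v:v\in\H^{2}\,\}$ for some smooth positive $\Gamma$-invariant function $r\colon\H^{2}\to\R_{>0}$, which descends to a function on $\Sigma\simeq\H^{2}/\Gamma$.

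A direct computation then gives the induced metric as $g=r^{2}h_\Gamma-dr\otimes dr=e^{2u}(h_\Gamma-du\otimes du)$ with $u=\log r$, so the spacelike condition becomes $|du|_{h_\Gamma}<1$ pointwise. Rescaling $g$ by a constant $\lambda^{2}$ corresponds to rescaling $r$ by $\lambda$, so after a global scaling one may normalize $\max_\Sigma r=1$. Since $u$ is $1$-Lipschitz for $h_\Gamma$ and $(\Sigma,h_\Gamma)$ is compact of diameter $D=D(\Gamma)$, this yields $e^{-D}\le r\le 1$ on $\Sigma$. In an $h_\Gamma$-orthonormal frame whose first vector is along $\nabla^{h_\Gamma}u$, the eigenvalues of $g$ relative to $h_\Gamma$ are $r^{2}(1-|du|_{h_\Gamma}^{2})$ and $r^{2}$, so the upper bound $g\le h_\Gamma$ is automatic. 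The lower bound $g\ge C^{-1}h_\Gamma$ therefore reduces to a uniform estimate of the form
\[
\sup_\Sigma |du|_{h_\Gamma}^{2}\le 1-\varepsilon(\Gamma).
\]

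This slope bound is the main obstacle and is where the negative curvature of $g$ must enter; without it there is no mechanism preventing the tangent planes of $\tilde f(\widetilde\Sigma)$ from approaching null. By the Gauss equation for a spacelike surface in the flat ambient $\mathbb{R}^{2,1}$ one has $K_g=-\det B$, so the hypothesis $K_g<0$ is equivalent to strict Lorentzian convexity of $\tilde f(\widetilde\Sigma)$, i.e.\ definiteness of the Weingarten operator. Expanding this in terms of $u$ yields a fully nonlinear elliptic inequality of Monge-Amp\`ere type on $(\Sigma,h_\Gamma)$ whose leading coefficient degenerates precisely when $|du|_{h_\Gamma}\to 1$. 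My plan is to test this inequality at an interior maximum $p$ of $|du|_{h_\Gamma}^{2}$: the first-order condition $\nabla|du|^{2}=0$ kills the components of $\mathrm{Hess}\, u$ along $\nabla u$, and the Bochner formula on the hyperbolic background $(\Sigma,h_\Gamma)$ then controls the remaining Hessian terms, producing a quantitative gap $|du|^{2}(p)\le 1-\varepsilon(\Gamma)$ depending only on the geometry of $(\Sigma,h_\Gamma)$. As a back-up strategy, one can argue by compactness: a sequence of normalized admissible graphs violating the claimed bound would have a $C^{0}$ subsequential limit that is a $\Gamma$-equivariant spacelike Cauchy surface with a null tangent plane, contradicting the Labourie-Schlenker uniqueness recalled above. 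The core difficulty is making either version of this argument quantitative and uniform over $g\in\widetilde{B}(\Sigma)$.
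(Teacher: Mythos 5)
Your reduction to a radial graph and the trapping between two levels is correct, and in fact your route to the upper bound is simpler than the paper's. Writing $g=e^{2u}(h_\Gamma-du\otimes du)$ and observing that spacelikeness gives $|du|_{h_\Gamma}<1$, hence that $u$ is $1$-Lipschitz on the compact surface $(\Sigma,h_\Gamma)$ of diameter $D(\Gamma)$, immediately traps the graph between two levels of uniformly bounded ratio. The paper instead normalizes the surface to be tangent to $\H^2_1$ from above and bounds it by the boundary of the convex hull of the orbit $\Gamma\cdot p$, using the Klein model, cocompactness and Carath\'eodory; your Lipschitz argument achieves the same trapping without even invoking convexity. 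Up to this point the two proofs agree on what must be shown next: a uniform gap $\sup_\Sigma|du|^2_{h_\Gamma}\le 1-\varepsilon(\Gamma)$.

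That slope bound is exactly where your proposal has a genuine gap. The hypothesis is only $K_g<0$, with no uniform lower bound on $|K_g|$ over $g\in\widetilde B(\Sigma)$, so the Monge--Amp\`ere inequality you propose to test at a maximum of $|du|^2$ is merely the strict positivity of a determinant; it degenerates together with the curvature, and neither the first-order condition at the maximum nor a Bochner identity on the hyperbolic background can extract a quantitative $\varepsilon(\Gamma)$ from it. The back-up compactness argument is likewise not a proof: a violating sequence need not subconverge to a smooth spacelike surface, and Labourie--Schlenker uniqueness says nothing about such degenerate limits. The missing idea is to use convexity \emph{globally} rather than pointwise: by Mess's theorem the equivariant lift is an entire graph of a function over $\R^2$ in the flat Cartesian coordinates, and $K_g<0$ makes this function convex, so every tangent plane is a global support plane lying in the past of the whole surface, hence in the past of the upper trapping level $\H^2_{\alpha}$. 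An explicit computation (intersecting a spacelike plane through $(0,0,r)$, $1\le r\le\alpha$, with the hyperbola $x^2-z^2=-\alpha^2$) shows that such a plane avoids $\H^2_\alpha$ only if its slope satisfies $a^2\le 1-1/\alpha^2$, which is precisely the uniform bound $|du|^2\le 1-\varepsilon(\Gamma)$ you need. Without this support-plane step (or some substitute that genuinely uses convexity in the large), your lower bound does not close.
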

This translates in the Teichm\"uller space to the following fact:
\begin{corollary}\label{C2}
    The set $B(\Sigma)$ is relatively compact. In particular, there are some conformal structures on $\Sigma$ that can  not be realized by convex embeddings in $\mathcal{T}/\Gamma$.
\end{corollary}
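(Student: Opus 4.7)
The plan is to derive both assertions as quick consequences of Theorem \ref{T2}: relative compactness follows from a bi-Lipschitz-to-quasiconformal passage that places $B(\Sigma)$ inside a bounded Teichm\"uller ball, and the non-realization statement follows from non-compactness of $Teich(\Sigma)$.

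For the first step I would proceed as follows. Given $[g]\in B(\Sigma)$, choose a representative $g\in\widetilde{B}(\Sigma)$ and rescale it by the constant factor supplied by Theorem \ref{T2}; such a rescaling is a global conformal change and hence fixes $[g]\in Teich(\Sigma)$. The rescaled metric satisfies $C^{-1}h_\Gamma\leq g\leq C h_\Gamma$, so the identity map $\mathrm{id}:(\Sigma,g)\to(\Sigma,h_\Gamma)$ is $\sqrt{C}$-bi-Lipschitz and therefore a $C$-quasiconformal homeomorphism between the underlying Riemann surfaces $(\Sigma,[g])$ and $(\Sigma,[h_\Gamma])$. The definition of the Teichm\"uller metric then yields $d_{Teich}([g],[h_\Gamma])\leq \tfrac{1}{2}\log C$, so $B(\Sigma)$ is contained in the closed Teichm\"uller ball of radius $\tfrac{1}{2}\log C$ around $[h_\Gamma]$. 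Since the Teichm\"uller metric is proper on $Teich(\Sigma)$ (closed balls are compact), this proves relative compactness.

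For the second step I would simply invoke non-compactness of $Teich(\Sigma)$, which holds because $\Sigma$ has genus $\geq 2$ (for instance, pinching a non-separating simple closed curve yields a divergent sequence). The compact closure $\overline{B(\Sigma)}$ is then a proper subset of $Teich(\Sigma)$; any conformal class outside it admits no negatively curved isometric embedding in $\mathcal{T}/\Gamma$, and since isometric embeddings in $\mathcal{T}/\Gamma$ correspond to graphs of convex functions on the universal cover (as recalled in the introduction), such a class is not the conformal class of any convex embedding. There is no genuine obstacle in this corollary: all the work is absorbed into Theorem \ref{T2}, and the remaining ingredients---bi-Lipschitz implies quasiconformal, properness of the Teichm\"uller metric, and non-compactness of $Teich(\Sigma)$---are classical.
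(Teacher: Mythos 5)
Your proof is correct and follows essentially the same route as the paper: both deduce from Theorem \ref{T2} that the identity map between $(\Sigma,[g])$ and $(\Sigma,[h_\Gamma])$ has dilatation bounded by a constant depending only on $\Gamma$, hence $B(\Sigma)$ sits in a closed Teichm\"uller ball about $[h_\Gamma]$, and relative compactness follows from properness of the Teichm\"uller metric. Your explicit treatment of the rescaling (noting it fixes the point in $Teich(\Sigma)$) and of the second assertion via non-compactness of $Teich(\Sigma)$ fills in details the paper leaves implicit, but the argument is the same.
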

\begin{remark}
  This is the only flat maximal globally hyperbolic spatially compact spacetime with such a property. Indeed, If $M$ is a $(2+1)$-flat maximal globally hyperbolic spatially compact spacetime (with a hyperbolic Cauchy surface) and non isometric to $\mathcal{T}/\Gamma$, then by the work of Barbot, Béguin and Zeghib \cite{zeghib}, $M$ is foliated by spacelike copies of $\Sigma$ with constant negative curvature that projects into a curve converging to the boundary in the Teichm\"uller space (see \cite{Belraouti}).
\end{remark}
 
\subsection{Structure of the paper} 
\begin{itemize}
    \item In section $2$, we present the construction of almost isometric maps that will be used in the proof of Theorem \ref{T1}. The construction uses Gromov's convex integration theory \cite{gromov1986}.
    \item In section $3$, we recall some elements of Teichm\"uller theory.
    \item In section $4$, we give the proof of Theorem \ref{T1} for genus $1$ surfaces and then for higher genus. 
    \item The last section is devoted to the proof of Theorem \ref{T2}. The problem is presented in an equivariant form. We establish that any convex embedding is bounded by two hyperbolic levels that depend only on $\Gamma$. We then, prove that the tangent planes are uniformly spacelike.
\end{itemize}   
\subsection{Acknowledgement}
This work is part of my PhD research. I am grateful to my PhD supervisors, Aurélien Alvarez and Vincent Borrelli, for their encouragement and support throughout this work, and for their helpful remarks on the early drafts of this paper. I am also thankful to Ghani Zeghib for suggesting the rigidity problem and for the many helpful discussions about it. 
\section{Almost isometric maps}
 
In the following, we present the main construction used to prove Theorem \ref{T1}. Here and in the rest of the paper, we will consider as target only Lorentzian manifolds of dimension $3$, but all the constructions and arguments extend directly to the general case.
\medskip

Consider $f: \Sigma \rightarrow M$ to be a spacelike embedding of $\Sigma$ ($f^*h$ defines a Riemannian metric on $\Sigma$), and let $g$ be a Riemannian metric on $\Sigma$. We will use a variant of the convex integration formula \cite{spring}, called the corrugation process \cite{theilliere2022convex}. This process applied to $f$ will allow us to find $\varepsilon$-isometric embeddings for the metric $g$, where by an $\varepsilon$-isometric embedding, we mean an embedding $F: \Sigma \rightarrow (M, h)$ such that $\|F^{*}h - g\|_{C^{0}} \leq \varepsilon$.

\subsection{Local construction}
We start with the local construction as the corrugation process formula is defined locally. To illustrate the idea, we present the result in the most simple setting:

\begin{proposition}\label{local}
    Let $(M, h)$ be the Minkowski space $\mathbb{R}^{2,1}$ with the usual Lorentzian metric and let $f : C := [0,1]^2 \rightarrow M$ be a spacelike embedding\footnote{We care only about what happens in the interior of $C$.}. Consider the metric $\mu = f^{*}h - \eta \, d\varpi \otimes d\varpi$, where $\varpi : C \rightarrow \mathbb{R}$ is an affine projection and $\eta : C \rightarrow \mathbb{R}_{\geq 0}$ is a smooth function. Then, for any $\varepsilon > 0$, there exists an $\varepsilon$-isometric spacelike embedding $F : (C, \mu) \rightarrow (M, h)$, i.e. an embedding such that
    $$
    \|F^{*}h - \mu\|_{C^{0}} \leq \varepsilon.
    $$
\end{proposition}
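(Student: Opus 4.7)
The plan is to use a high-frequency corrugation ansatz of Theillière type. After an affine change of coordinates on $C=[0,1]^2$ I may assume $\varpi(x,y)=x$. Since $f$ is spacelike and $C$ is contractible, I choose a smooth unit timelike normal field $\nu:C\to\mathbb{R}^{2,1}$ along $f$, which orients the plane in which the corrugation will take place. I look for $F$ in the form
\[
F_N(x,y)\;=\;f(x,y)+\tfrac{1}{N}\,\Gamma(Nx,x,y),
\]
where $\Gamma(\cdot,x,y)$ is smooth, $1$-periodic in $s$, and has zero mean in $s$.

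A direct computation yields $\partial_xF_N=\partial_xf+\partial_s\Gamma+O(1/N)$ and $\partial_yF_N=\partial_yf+O(1/N)$, whence
\begin{align*}
(F_N^*h)_{xx}&=f^*h_{xx}+2h(\partial_xf,\partial_s\Gamma)+h(\partial_s\Gamma,\partial_s\Gamma)+O(1/N),\\
(F_N^*h)_{xy}&=f^*h_{xy}+h(\partial_s\Gamma,\partial_yf)+O(1/N),\\
(F_N^*h)_{yy}&=f^*h_{yy}+O(1/N).
\end{align*}
Matching the target $\mu=f^*h-\eta\,dx\otimes dx$ reduces the problem to finding, pointwise in $(x,y)$, a smooth $1$-periodic map $s\mapsto\partial_s\Gamma(s,x,y)\in\mathbb{R}^{2,1}$ that lies in the plane $E(x,y):=\partial_yf(x,y)^{\perp_h}$, has zero mean in $s$, and satisfies
\[
2\,h(\partial_xf,\partial_s\Gamma)+h(\partial_s\Gamma,\partial_s\Gamma)=-\eta(x,y)
\]
for every $s$.

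To solve this, let $p:=\pi_E(\partial_xf)\in E$ be the $h$-orthogonal projection of $\partial_xf$ onto $E$, and set $w:=\partial_s\Gamma+p$. Completing the square converts the quadratic equation into $h(w,w)=h(p,p)-\eta$, while the zero-mean condition becomes $\int_0^1 w(s,\cdot)\,ds=p$. Because $\partial_yf$ is $h$-spacelike, the plane $E$ inherits signature $(1,1)$; moreover $h(p,p)=\det(f^*h)/(f^*h)_{yy}>0$, so $p$ is itself spacelike in $E$. The condition that $\mu$ be Riemannian (implicit in the conclusion that $F$ is spacelike) forces $\eta<h(p,p)$ by a $2\times 2$ determinant check, so the level set $\{w\in E:h(w,w)=h(p,p)-\eta\}$ is a spacelike hyperbola in $E$. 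Its branch on the same side as $p$ is a smooth convex curve, and $p$ lies in the interior of the convex hull of this branch precisely when $\eta>0$. Standard one-dimensional convex integration then provides a smooth closed curve $w(\cdot,x,y)$ on this branch with mean $p$, depending smoothly on $(x,y)$ and shrinking continuously to the constant curve $w\equiv p$ on $\{\eta=0\}$. Setting $\partial_s\Gamma:=w-p$ and integrating in $s$ gives the required profile $\Gamma$.

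With this $\Gamma$, the asymptotic expansion above gives $\|F_N^*h-\mu\|_{C^0}=O(1/N)$, so $F_N$ is $\varepsilon$-isometric for $N$ large. The tangent wedge $\partial_xF_N\wedge\partial_yF_N$ reduces to $w(Nx,x,y)\wedge\partial_yf+O(1/N)$, which is bounded away from zero because $w$ is bounded away from zero on the chosen hyperbola branch and $w\in E=\partial_yf^{\perp_h}$ forces $w\not\parallel\partial_yf$; hence $F_N$ is a uniform immersion, and since $F_N\to f$ in $C^0$ it is also an embedding for $N$ large. The main obstacle is the loop construction in step three: the target level set is a non-compact Lorentzian hyperbola, rather than a compact ellipse as in the Riemannian convex-integration setting, so one has to choose the correct branch coherently and shrink the loop continuously to a point over $\{\eta=0\}$ in order to keep $\Gamma$ smooth. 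This is precisely where the signature of the ambient metric enters.
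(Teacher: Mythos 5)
Your proposal is correct and follows essentially the same route as the paper: the corrugation ansatz $F_N=f+\frac{1}{N}\Gamma(N\varpi,\cdot)$, the reduction to a $1$-periodic loop with prescribed mean lying on a spacelike hyperbola branch in the Lorentzian plane $h$-orthogonal to $df(\ker d\varpi)$, and the $O(1/N)$ error estimate. The paper simply makes your ``standard one-dimensional convex integration'' step explicit by taking the loop $w=r\bigl(\cosh(\alpha\cos 2\pi s)\,\mathbf{u}+\sinh(\alpha\cos 2\pi s)\,\mathbf{n}\bigr)$ and solving for $r$ and $\alpha$, which is exactly your hyperbola branch with mean $p$ (including the degeneration $\alpha\to 0$ on $\{\eta=0\}$).
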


\begin{proof}
    
 The corrugation process formula in this setting is given by: 
$$
\forall p \in C, \quad F(p)=f(p)+\frac{1}{N} \int_{0}^{N \varpi(p)}(\gamma(p, s)-\bar{\gamma}(p)) \mathrm{d} s
$$
where $\gamma: C \times(\mathbb{R} / \mathbb{Z}) \rightarrow \mathbb{R}^{2,1}$ is a smooth family of loops that will be defined below, $N$ is an integer called the corrugation number, and $\Bar{\gamma}(x) = \int_0^1 \gamma(x, t) \, dt$ is the average of $\gamma$ at each point.

Let $(v,u)$ be an orthonormal basis for the metric $f^*h$ on $C$, such that $v\in ker(d\varpi)$, and let 
$$\mathbf{v}=df(v) \; \; \; \; \mathbf{u}=df(u)$$
Consider $\mathbf{n}(p)$ to be the normal to $T_{f(p)}f(C)$ with respect to the metric $h$. Note that $\mathbf{n}$ is a timelike vector field of norm $-1$ since $f$ is spacelike.
We can now define the loop family $\gamma$ by: 

\begin{equation}\label{loop}
    \gamma(\cdot, s):=r(\cosh (\theta) \mathbf{u}+ \sinh (\theta) \mathbf{n}) \quad \text {with} \quad \theta=\alpha \cos (2 \pi s)
\end{equation}
and where $r$ and $\alpha$ are smooth functions that will be chosen below such that the average $\bar{\gamma}$ satisfy: 

$$\bar{\gamma} =\frac{\mathbf{u}}{d\varpi(u)}$$
Differentiating $F$, we get: 
$$
    dF  = df + (\gamma(p, N \varpi(p))-\bar{\gamma}(p)) \otimes d \varpi+\frac{1}{N} \int_{0}^{N \varpi(p)} d(\gamma(p, s)-\bar{\gamma}(p)) \mathrm{d} s .$$
Let $L$ be the target differential defined by: $$
L:=d f+(\gamma(\cdot, N \varpi)-\bar{\gamma}) \otimes d \varpi
$$
 This target differential coincides with $d f$ on ker $d \varpi$ and modifies it on the transversal directions by the addition of a term depending on $\gamma$. 
\\It can be checked that  $ \int_{0}^{N \varpi(p)}(\gamma(p, s)-\bar{\gamma}(p)) \mathrm{d}s= \int_{\lfloor N\varpi(p) \rfloor}^{N \varpi(p)}(\gamma(p, s)-\bar{\gamma}(p)) \mathrm{d}s $ and since $\gamma$ is continuous on a compact set, we will get 
$$F^*h = L^*h + O(\frac{1}{N}) $$
Of course if $N$ is big enough, then $F$ induces a Riemannian metric on $C$ if $L$ does. We will now show that by a convenient choice of $r$ and $\alpha$, we can get $L^{*}h=\mu$. Recall that:
$$ L= df + (r(\cosh (\theta) \mathbf{u}+ \sinh (\theta) \mathbf{n}) - \frac{\mathbf{u}}{d\varpi(u)}) \otimes d\varpi $$
and hence 
$$L^*h = f^*h + (r^{2} - \frac{1}{d\varpi(u)^{2}})d\varpi\otimes d\varpi$$
We choose now $r$ so that $$r^{2} = \frac{1}{d\varpi(u)^{2}} - \eta $$
this is of course possible since we assumed $\mu$ to be Riemannian. This implies that $L$ is $\mu$-isometric ($L^{*}h=\mu$).
\\ The only thing left now is to choose $\alpha$, so that 
$$\bar{\gamma} =\frac{\mathbf{u}}{d\varpi(u)}$$
Recall that the average of $\gamma$ is given by
$$\bar{\gamma}  =r\left(\int_{0}^{1} \cosh (\alpha \cos (2 \pi s)) \mathrm{d} s\right) \mathbf{u}$$
therefore, $\alpha$ must satisfy 
$$\int_{0}^{1} \cosh (\alpha \cos (2 \pi s)) \mathrm{d} s=\frac{1}{rd\varpi(u)}  $$
Since the function $$\alpha \in [0,+\infty[ \mapsto \left(\int_{0}^{1} \cosh (\alpha \cos (2 \pi s)) \mathrm{d} s\right) \in [1,+\infty[$$ is smooth, bijective and $$ \frac{1}{rd\varpi(u)} \ge 1$$ 
the above equation has a unique solution that varies smoothly.
Choosing $N$ big enough, we conclude that $F$ is $\varepsilon$-isometric for the metric $\mu$.
\end{proof}
   \begin{remark}\label{gluing rk}
    Notice that for any $x\in C$ such that $\eta(x)=0$ ($\mu(x)=f^{*}h (x)$), we have $f(x)=F(x)$. 
\end{remark}

Proposition \ref{local} generalizes immediately for general Lorentzian manifolds. It suffices to consider the general formula of the corrugation process given by: 

    Starting from a map $f_0 : U \rightarrow (M, h)$ on an open set $U \subset \Sigma$, a submersion $\varpi : U \rightarrow \mathbb{R}$, and a smooth loop family $\gamma : U \times \mathbb{R}/\mathbb{Z} \rightarrow f^*TM$ such that $\gamma(x, \cdot) : \mathbb{R}/\mathbb{Z} \rightarrow f^*T_xM$ for every $x \in U$. The map defined by the Corrugation Process is:
    $$
    F = CP_\gamma (f, \varpi, N) : x \mapsto \exp_{f(x)} \left( \frac{1}{N} \Gamma(x, N\varpi(x)) \right)
    $$
    where $\Gamma(x, s) = \int_0^s (\gamma(x, t) - \Bar{\gamma}(x)) \, dt$ and $\Bar{\gamma}(x) = \int_0^1 \gamma(x, t) \, dt$.

 If we take $U$ to be compact and such that $f^*TM$ is trivial, then we can find a smaller neighborhood of the zero section of $f^*TM \rightarrow U$ where the exponential map is well defined. Since $\Gamma(x,s)$ is bounded, we can choose $N$ big enough so that $\frac{1}{N}\Gamma(x,N\varpi(x))$ lies inside this neighborhood. With the same loop family \ref{loop} as before we get $$F^*h=f^*h - \eta d\varpi^{2} + O(\frac{1}{N}).$$

\subsection{Global construction}
    Even though the corrugation process is defined locally, we can still use it to construct global almost isometric maps.
    \begin{proposition}
        Let $(\Sigma,g)$ be a closed Riemannian surface and $(M,h)$ a Lorentzian manifold. For any long spacelike embedding $f:\Sigma \rightarrow M$ (i.e, $\Delta =f^*h-g $ is semi-definite positive), and any $\varepsilon>0$, there exists a spacelike embedding $F : \Sigma \rightarrow M$ such that :
$$\left\|F^{*}h-g\right\|_{C^{0}}\leq \varepsilon$$ 
    \end{proposition}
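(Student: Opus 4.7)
The plan is to iterate the local corrugation procedure of Proposition~\ref{local}, cancelling the defect $\Delta := f^{*}h - g$ one rank-one summand at a time. Since $\Delta$ is a smooth positive semi-definite symmetric $(0,2)$-tensor on the closed surface $\Sigma$, a standard argument combining a finite coordinate atlas, a subordinate partition of unity, and the fact that in any chart the cone of positive semi-definite symmetric $2{\times}2$ matrices is positively spanned by finitely many rank-one tensors of the form $d\varpi\otimes d\varpi$ (with $\varpi$ an affine function) yields a decomposition
$$\Delta \;=\; \sum_{k=1}^{N} \eta_{k}\, d\varpi_{k}\otimes d\varpi_{k},$$
where each $\eta_{k}\colon\Sigma\to[0,\infty)$ is smooth with compact support inside a chart $U_{k}$ on which $\varpi_{k}$ is defined.

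I then build smooth spacelike maps $f_{0}=f, f_{1},\dots,f_{N}$ inductively. Assume that
$$\Big\lVert f_{k-1}^{*}h \,-\, \Big(g+\sum_{j\geq k}\eta_{j}\, d\varpi_{j}\otimes d\varpi_{j}\Big)\Big\rVert_{C^{0}} \leq \frac{(k-1)\varepsilon}{N}.$$
Then the target $\mu_{k} := f_{k-1}^{*}h - \eta_{k}\, d\varpi_{k}\otimes d\varpi_{k}$ is a small perturbation of $g+\sum_{j>k}\eta_{j}\, d\varpi_{j}\otimes d\varpi_{j}\geq g$, hence remains Riemannian. Applying Proposition~\ref{local} in its manifold form (the corrugation process via the exponential map of $M$ as recalled after its proof) on $U_{k}$ to $f_{k-1}$ with data $(\varpi_{k},\eta_{k})$, and choosing the corrugation number $N_{k}$ large enough so that $\lVert F_{k}^{*}h - \mu_{k}\rVert_{C^{0}}\leq \varepsilon/N$ on $U_{k}$, produces a map $F_{k}$ which by Remark~\ref{gluing rk} agrees with $f_{k-1}$ wherever $\eta_{k}$ vanishes. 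Extending by $f_{k-1}$ outside $U_{k}$ gives a smooth $f_{k}\colon\Sigma\to M$ satisfying the inductive hypothesis at level $k+1$.

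After $N$ iterations, $f_{N}$ satisfies $\lVert f_{N}^{*}h - g\rVert_{C^{0}}\leq \varepsilon$, and it is a spacelike immersion because its induced metric is Riemannian. For the embedding property, I exploit that the corrugation formula yields $f_{k}-f_{k-1}=O(1/N_{k})$ in $C^{0}$, while $df_{k}$ is bounded by a constant independent of $N_{k}$: only the $1/N$-weighted error term in $dF$ depends on $N$, the principal part $L_{k}$ being $N$-independent. Hence by choosing the $N_{k}$ successively large, each $f_{k}$ is $C^{0}$-close to $f$ and uniformly Lipschitz, and a standard compactness argument combining this closeness with local injectivity of the immersion forces global injectivity on the compact surface $\Sigma$.

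The chief technical obstacle is coordinating these three conditions—$C^{0}$-approximation to $g$, Riemannian character of each intermediate induced metric (needed to reapply Proposition~\ref{local}), and preservation of the embedding property—through the finite iteration. The first two are handled by the inductive bookkeeping above. The third is subtler: it hinges on the fact that corrugation amplitudes, rather than their wavenumbers $N_{k}$, control the $C^{1}$-size of the maps, so that a uniform Lipschitz bound survives the iteration and $C^{0}$-closeness to the embedding $f$ suffices to prevent distinct points of $\Sigma$ from merging.
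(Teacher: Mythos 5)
Your proposal follows essentially the same route as the paper: a Nash-type decomposition of the defect $\Delta=f^{*}h-g$ into finitely many compactly supported rank-one terms $\eta_k\,d\varpi_k\otimes d\varpi_k$ via charts and a partition of unity, followed by one application of the local corrugation process (Proposition~\ref{local}) per term, glued globally through Remark~\ref{gluing rk}, with corrugation numbers chosen large enough at each stage to keep the intermediate targets Riemannian and the cumulative error below $\varepsilon$. Your additional remarks on why the final map remains an embedding (amplitudes, not wavenumbers, control the $C^1$ size) correctly fill in a point the paper leaves implicit, but do not change the argument.
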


The construction follows the approach of Nash \cite{nash1954c1} and it proceeds as follows: 
\begin{itemize}
\item We choose a finite set $\phi_i: U_i \subset \mathbb{R}^2 \rightarrow V_i \subset \Sigma $ of local parametrizations of $\Sigma$ and a partition of unity $\{\rho_i\}$ subordinated to it ($supp(\rho_i) \subset U_i$) for all $i\in [\![1,K]\!]$.
    \item We identify the space of inner products of $\mathbb{R}^{2}$ (given by symmetric positive definite matrices $\begin{bmatrix} E & F \\ F & G \\ \end{bmatrix}$) with the interior of the cone $S^{+}(\mathbb{R}^{2}) \subset \R^{3}$ defined by the conditions:
    $$
\begin{cases} EG-F^{2} > 0, \\ E, G > 0 \end{cases}
$$
\item We note $\Delta_i =:\phi_i^*\Delta_{|\phi_i(supp(\rho_i))}$ so that we have
$$ \Delta = \sum_i \rho_i \Delta_i.$$
Since $supp(\rho_i)$ is compact and $\Delta_i : supp(\rho_i) \rightarrow S^{+}(\mathbb{R}^{2})$ is continuous, we can find $p_i$ squares of linear forms $\ell_{i,1}\otimes \ell_{i,1} , \ell_{i,2}\otimes \ell_{i,2}, ..., \ell_{i,p_i}\otimes \ell_{i,p_i}$ such that 
$$\Delta_i = \sum_{j=1}^{{p_i}} \eta_{i,j} \ell_{i,j} \otimes \ell_{i,j}$$ 
where each $\eta_{i,j} : supp(\rho_i) \rightarrow \mathbb{R}^{+} $ is a smooth function. For a proof of this fact, check lemma $29.3.1$ in \cite{eliash}.
\item Working chart by chart, we will build a sequence of intermediary maps $f=F_0, F_1, ..., F_K$ defined on $\Sigma$ such that
$$\left\|F_i^{*}h-(f^{*}h - \sum_{k=1}^{i}\rho_j \Delta_j)\right\|_{C^{0}}\leq \varepsilon$$
The map $F_K$ will be $\varepsilon$-isometric for the metric $g$ provided the corrugation numbers are large enough in each step.
 
\item Each map $F_i$ is built by applying $p_i$ successive corrugation process on $F_{i-1}$. We will use proposition \ref{local} in the chart $U_i$ to build a sequence of maps $F_{i-1}=F_i^{0}, F_i^{1},..., F_i^{p_i}=F_i $, where 
$$F_i^{j}=CP(F_i^{j-1}, \mu_i^{j}, \ell_{i,j}, N_{i,j})$$ 
and $\mu_i^{j} = F_{i}^{j-1 *}h - \rho_i\eta_{i,j} \ell_{i,j} \otimes \ell_{i,j}$. By remark \ref{gluing rk}, the maps are well defined on $\Sigma$. In fact, they all coincide outside $\phi_i(supp(\rho_i))$ since the metric $\mu_i^{j}$ is equal to $F_{i-1}^*h$ there.
\end{itemize}

\section{Some Teichm\"uller theory }
In the previous section, we managed to build for any Riemannian metric $g$ an embedding that is arbitrarily close to be conformal for $g$. This already proves that the set of Riemannian metrics on $\Sigma$ that can be conformally embedded in $(M,h)$ is dense in the space of metrics. Before proceeding into the proof of the embedding Theorem \ref{T1}, we will recall some facts from Teichm\"uller theory (for more details, check \cite{imayoshi2012introduction}, \cite{hubbard2016teichmuller}).

 Let $\Sigma$ be a closed orientable surface of genus $d\geq 1$. The Teichm\"uller space $Teich(\Sigma)$ can be described as the set of marked Riemann surfaces (pairs $(X,\varphi_X)$ consisting of a Riemann surface $X$ and a homeomorphism $\varphi_X:\Sigma\rightarrow X$) where we identify any two marked Riemann surfaces $(X,\varphi_X)$ and $(Y,\varphi_Y)$ if there exists a biholomorphism in the homotopy class of $\varphi_X\circ \varphi_Y^{-1}$. For simplicity, we will note by $(X,\varphi_X)$ for the class of $(X,\varphi_X)$ in $Teich(\Sigma)$. 
 
       Let $X$ and $Y$ be two closed Riemann surfaces of genus $d\geq 1$ and let $\varphi$ be an orientation preserving homeomorphism between them. We will consider for our purpose only maps that are smooth outside of a finite number of point $E \subset \Sigma$. We want to compare the complex structure of $X$ and $Y$.

  We consider in local coordinates, the dilatation of $\varphi$ to be the quantity
 $$Dil(\varphi)=\sup_{z\in \Sigma \setminus E}\left(\sup \left|\frac{d\varphi}{dz}\right|\bigg/ \inf \left|\frac{d\varphi}{dz}\right|\right)$$ 

 We say that the map $\varphi$ is quasiconformal if and only if $Dil(\varphi) < \infty$. Notice that $Dil(\varphi) \geq 1$ and $\varphi$ is conformal if and only if $Dil(\varphi)=1$.

 A distance on $Teich(\Sigma)$ called the Teichm\"uller distance can be defined by
  $$
    d_{Teich}((X,\varphi),(Y,\phi))= \frac{1}{2}\inf \log(Dil(\widetilde{\varphi}\circ \widetilde{\phi}^{-1})),$$
where the infimum is taken over all quasiconformal maps $\widetilde{\varphi}$ and $\widetilde{\phi}$ such that $\widetilde{\varphi}\circ \widetilde{\phi}^{-1}$ is homotopic to $\varphi\circ \phi^{-1}$.

Suppose now that we have a surface \(\Sigma\) and two Riemannian metrics \(g_1\) and \(g_2\) on it. We denote the associated complex structures by \(X(g_1)\) and \(X(g_2)\), respectively. We will need in the following to estimate the distance between $X(g_1)$ and $X(g_2)$. For that, we need to compute the dilatation $Dil(id)$ of the identity map from \(X(g_1)\) to \(X(g_2)\), given by
$$
    Dil(id) = \sup_{z\in \Sigma}\sqrt{\sup \frac{g_2}{g_1} \bigg/ \inf \frac{g_2}{g_1}},
$$
where the supremum and infimum are taken over all directions. In particular, we have
\begin{equation}\label{dila}
   d\left((X(g_1), id_\Sigma), (X(g_2), id_\Sigma)\right) \leq \frac{1}{2} \log(Dil(id)). 
\end{equation}

  \section{Proof of Theorem \ref{T1}}
Let $\Sigma$ be a closed orientable surface of genus $d \geq 1$, $(M,h)$ a pseudo-Riemannian manifold of $\dim \geq 3$ and $f: \Sigma \rightarrow M$ a spacelike embedding. For a fixed Riemann surface structure $X$ on $\Sigma$, we want to find a conformal embedding in $(M,h)$.  The strategy is as follows:
\begin{itemize}
\item Consider $B_\rho$ to be a closed ball in $Teich(\Sigma)$ centered at $(X,id_\Sigma)$ and of radius $\rho$. 
    \item We use the corrugation process to build a family of spacelike embeddings $F_q : \Sigma \rightarrow M$ for each $q\in B_\rho$.
    \item For each $q\in B_\rho$, the embedding $F_q$ defines a Riemann surface structure $X_q$ on $\Sigma$ induced by the metric $F_q^{*}h$. We denote by $G(q)$ the point $(X_q,id_\Sigma)$ in $Teich(\Sigma)$.
    \item We prove that the map $G$ is continuous, and it satisfies that $\|G(q)-q\|\leq \rho$ for every $q\in B_\rho$.
    \item Using a variant of Brouwer's fixed point theorem (proposition \ref{B} below), we find that $(X,id)\in Im(G)$. Therefore, there exists $q\in B_\rho$ such that $F_q$ is a smooth conformal embedding for $X$.
   
\end{itemize} 
 
 We distinguish two cases: when $\Sigma$ has genus one, and then higher genus case. But before that, let us prove the last ingredient of the proof:
 \begin{proposition}\label{B}
Let $\psi:\overline{B(p_0,r)} \rightarrow \mathbb{R}^n$ be a continuous map from the closed Euclidean ball of center $p_0 \in \mathbb{R}^n$ and radius $r>0$ that satisfies
$$ \parallel \psi(p)-p \parallel \leq r,$$
then $p_0 \in Im(\psi)$.
\end{proposition}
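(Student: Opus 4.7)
The plan is to reduce the statement directly to the classical Brouwer fixed point theorem by constructing an auxiliary self-map of $\overline{B(p_0,r)}$ whose fixed points are exactly the preimages of $p_0$ under $\psi$.

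First I would define the continuous map $h : \overline{B(p_0,r)} \to \mathbb{R}^n$ by
$$ h(p) = p_0 + p - \psi(p). $$
The key observation is that the hypothesis $\|\psi(p) - p\| \leq r$ translates immediately into a self-map property: indeed,
$$ \|h(p) - p_0\| = \|p - \psi(p)\| \leq r, $$
so in fact $h$ takes values in $\overline{B(p_0,r)}$. Thus $h$ is a continuous self-map of a closed Euclidean ball, and Brouwer's fixed point theorem applies.

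Next, I would apply Brouwer to produce a fixed point $p^\star \in \overline{B(p_0,r)}$ with $h(p^\star) = p^\star$. Rewriting this equation gives $p_0 + p^\star - \psi(p^\star) = p^\star$, hence $\psi(p^\star) = p_0$, which is exactly the desired conclusion $p_0 \in \mathrm{Im}(\psi)$.

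There is no real obstacle here; the whole content of the proposition lies in choosing the right auxiliary map, which converts the displacement bound $\|\psi(p)-p\|\le r$ into the invariance of the ball. One could equivalently phrase this as a degree-theoretic argument (the map $p \mapsto \psi(p) - p_0$ is homotopic on the boundary sphere to the identity through $t \mapsto \psi(p) - p_0 + t(p - \psi(p))$, which never vanishes when $p \in \partial B(p_0,r)$ unless $\psi$ already hits $p_0$), but the Brouwer reduction above is the cleanest route.
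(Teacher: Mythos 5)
Your proof is correct and uses exactly the same auxiliary map $p \mapsto p_0 + p - \psi(p)$ and the same reduction to Brouwer's fixed point theorem as the paper; you merely make explicit the verification that this map sends the ball into itself, which the paper leaves implicit.
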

\begin{proof}
Applying Brouwer's fixed point Theorem to the map $p\longmapsto p_0+p-\psi(p)$, implies that there exists $p_1\in\overline{B(p_0,r)}$ such that $p_0+p_1-\psi(p_1)=p_1$, and hence, $\psi(p_1)=p_0$.
\end{proof} 

\subsection{Genus one case}
Let $\Sigma$ be a closed surface of genus one and let $f : \Sigma\longrightarrow  M$ be a spacelike embedding of $\Sigma$ into $M$. The Lorentzian metric $h$ induces a metric on $\Sigma$ and hence a conformal structure. For simplicity, we will assume that $\Sigma$ with this conformal structure is conformally equivalent to $\mathbb{C}/(\mathbb{Z}+ i\mathbb{Z})$. Hence, we see $f$ as a map from $C=[0,1]^{2}$ to $M$. We can also assume that the pullback of the metric has the form :
$$ f^{*}h= \lambda(z)^2|dz|^2 $$ 
\\ By the uniformization theorem, we know that any elliptic curve is conformally equivalent to the quotient of the complex plane by some lattice, i.e. $\mathbb{C}/\Lambda$ where $\Lambda = \langle\tau_1, \tau_2\rangle$
for some pair of translations $\tau_1$ and $\tau_2$. Now, since rotations and homotheties are biholomorphisms isotopic to identity, we can transform any pair $\tau_1$ and $\tau_2$ so that $\tau_1 = 1$ and $\tau_2$ is in the upper half plane, i.e. $\tau_2\in \mathbb{H}^2=\{z\in\mathbb{C}\; | \;Im(z) > 0 \}$.

From now on, we identify the Teichm\"uller space of $\Sigma$ with $\mathbb{H}^{2}$. We have for $w\in \mathbb{H}^{2}$ 
$$ \varphi_w :z\in \mathbb{C} \longrightarrow \frac{i+w}{2}(z+\mu_w \Bar{z})\in \mathbb{C} $$ 
where $\mu_w=\frac{i-w}{i+w}$, $\varphi_w$ is a Teichm\"uller mapping (a map with minimal dilatation) between $\mathbb{C}/(\mathbb{Z}+i\mathbb{Z})$ and $T_w=:\mathbb{C}/(\mathbb{Z}+w\mathbb{Z})$ (see \cite{imayoshi2012introduction}, \cite{nag} for more details). Hence, the metric : $$ h_w=\lambda(z)^{2}|dz+\mu_w d\Bar{z}|^2 $$ defines a new conformal structure on $\Sigma$ which is biholomorphic to $T_w$. 

 Fix $w_0 \in \mathbb{H}^{2}$, we want to construct a conformal embedding for the metric $h_{w_0}$. Let $B_\rho \subset \mathbb{H}^{2}$ be the closed disc centered at $w_0$ of radius $\rho$ and choose $\delta \in \mathbb{R}^{+}$ small enough so that the embedding $f$ is long for the metrics 
$g_w=\delta h_w$, meaning that 
$$f^*h-g_w$$
is positive definite for all $w\in B_\rho$. This is possible since $B_\rho$ is compact.

 Consider the map 
$$ \Delta : (z,w) \in C \times B_\rho \mapsto (f^*h-g_w)(z)\in S^{+}(\mathbb{R}^{2})$$
By construction, this map is continuous (as $g_w$ varies continuously with respect to $w$) and since $C\times B_\rho$ is compact, we can find $\ell_1\otimes \ell_1 , \ell_2\otimes \ell_2, ..., \ell_p\otimes \ell_p$ squares of linear forms such that 
$$ \Delta = \sum_{i=1}^{p} \eta_i \ell_i \otimes \ell_i$$ 
where each $\eta_i : C\times B_\rho \rightarrow \mathbb{R}^{+} $ is a smooth function. Moreover, since rational directions in $\mathbb{R}^{2}$ are dense and the image by $\Delta$ of $C\times B_\rho$ is compact, we can choose $\ell_1, \ell_2, ..., \ell_p$ so that their kernels contain non trivial vectors with integer coordinates, this will assure that the maps defined below are well defined on the quotient $\Sigma$.

 Using the decomposition above, we can build by iterating the corrugation process, a sequence of intermediary maps $f=F_{0}, F_1, ..., F_p=:F$ defined on $\Sigma \times B_\rho$, such that for each $w\in B_\rho$, we have:
$$F_i(.,w)=CP(F_{i-1}(.,w),\mu_i(.,w),\ell_i,N_i)$$
where $\mu_i = F_{i-1}^{*}h - \eta_i \ell_i \otimes \ell_i$. We note for simplicity, $F_p(.,w)=:F_w$.

Notice that the linear forms $\ell_1, \ell_2, ..., \ell_p$ are chosen uniformly for all $w\in B_\rho$. In addition, the coefficients $\eta_i$ vary continuously with $w$, and the corrugation numbers can be chosen uniformly, since $B_\rho$ is compact. Therefore, the map
 $$w\in B_\rho \longmapsto F_w^{*}h \in Met(\Sigma)$$ is continuous. Moreover, by construction, we have:
 $$\left\|F_w^{*}h-g_w\right\|_{C^{0}}=O\left(\frac{1}{N_1}\right)+...+ O\left(\frac{1}{N_p}\right)$$

For each $w\in B_\rho$, the metric $F_w^{*}h$ induces a conformal structure on $\Sigma$ that we will denote by $X_w$, and hence we have a map
$$G: w\in B_\rho \longmapsto (X_w, id_{\Sigma})\in \H^{2} $$
It follows from our construction that:
\begin{lemma} We have

     \begin{itemize}
        \item The map $G : B_\rho \rightarrow \mathbb{H}^{2}$ is continuous. 
        \item For any $\varepsilon>0$, we can choose the $N_1,...,N_p$ such that $d_{Teich}((X_w, id_{\Sigma}), (T_w, id_{\Sigma}))\leq \varepsilon$.
    \end{itemize}
\end{lemma}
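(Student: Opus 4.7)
The plan is to deduce both claims from the dilatation estimate (\ref{dila}) combined with the $C^{0}$-control on $F_w^{*}h$ established during the construction.

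For the first item, I would first note that the map $w \in B_\rho \mapsto F_w^{*}h \in Met(\Sigma)$ is continuous in the $C^{0}$ topology on metrics: at each stage of the iterated corrugation process, the linear forms $\ell_i$ and corrugation numbers $N_i$ are fixed, while the coefficients $\eta_i(\cdot,w)$, the loop families $\gamma_i(\cdot,w)$ and therefore the intermediate maps $F_i(\cdot,w)$ depend continuously on $w$ together with their first derivatives. Given $w_0, w_1 \in B_\rho$ I would then apply (\ref{dila}) with $g_1 = F_{w_0}^{*}h$ and $g_2 = F_{w_1}^{*}h$: since both are Riemannian metrics on the compact surface $\Sigma$ and they are uniformly bounded below (the embeddings $F_w$ remain spacelike on the compact parameter set $B_\rho$), a small $C^{0}$ perturbation forces the pointwise ratios of eigenvalues to be close to $1$, hence $Dil(id)$ close to $1$ and the Teichmüller distance tends to $0$ as $w_1 \to w_0$.

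For the second item, I would apply (\ref{dila}) this time with $g_1 = g_w$ (inducing $T_w$) and $g_2 = F_w^{*}h$ (inducing $X_w$). By the construction one has
\[
\|F_w^{*}h - g_w\|_{C^{0}} \;=\; O(1/N_1) + \cdots + O(1/N_p),
\]
uniformly in $w\in B_\rho$, because the loop families and the functions $\eta_i$ depend continuously on $w$ on the compact set $B_\rho$. On the other hand the family $g_w = \delta\,\lambda(z)^{2}|dz + \mu_w\, d\bar z|^{2}$ is uniformly bounded below away from zero on $\Sigma \times B_\rho$: $\lambda$ is positive on the compact surface, and $|\mu_w|$ stays strictly less than $1$ on the compact ball $B_\rho$. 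Consequently the pointwise ratios $F_w^{*}h/g_w$ can be made as close to $1$ as desired, uniformly in $w$ and in the direction, by taking all $N_i$ large. This forces $Dil(id) - 1$ arbitrarily small and (\ref{dila}) then gives $d_{Teich}((X_w, id_\Sigma), (T_w, id_\Sigma)) \leq \varepsilon$ uniformly in $w\in B_\rho$.

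The only delicate point, and the one I would state carefully, is uniformity in $w\in B_\rho$ of every estimate: the $C^{0}$-closeness between $F_w^{*}h$ and $g_w$ must be controlled by constants independent of $w$, and the background metric $g_w$ must be bounded below independently of $w$ so that translating a $C^{0}$ bound into a bound on ratios of eigenvalues is legitimate. Both requirements follow from compactness of $B_\rho$ together with the continuous dependence of all the geometric data on $w$, so the argument goes through with no further ingredient.
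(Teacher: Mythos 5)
Your proposal is correct and follows essentially the same route as the paper: continuity of $w\mapsto F_w^{*}h$ plus continuity of the map from metrics to Teichm\"uller space for the first item, and the dilatation estimate (\ref{dila}) applied to $g_w$ versus $F_w^{*}h$ together with the uniform bound $\|F_w^{*}h-g_w\|_{C^{0}}=O(1/N_1)+\cdots+O(1/N_p)$ for the second. You are merely more explicit than the paper about the two uniformity points (the lower bound on $g_w$ over the compact set $\Sigma\times B_\rho$ and the $w$-independence of the $O(1/N_i)$ constants), which the paper leaves implicit.
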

\begin{proof}
    The first point is clear, since the following maps:
    $$  w\in B_\rho \longmapsto  F_w^{*}h \in Met(\Sigma)$$
    $$g \in Met(\Sigma) \longmapsto (X(g), id_\Sigma) $$
    are continuous. 

    To prove the second point, we only need to compute the dilatation $K$ of the identity map between $T_w$ and $X_w$. Recall that the conformal structure $T_w$ (respectively $X_w$) is induced by the metric $g_w$ (respectively $F_w^{*}h$). Therefore, we have 
    $$K^{2} = \sup_{z\in \Sigma}\left(\sup\frac{F_w^{*}h}{g_w}/\inf \frac{F_w^{*}h}{g_w}\right) = 1 + O\left(\frac{1}{N_1}\right)+...+ O\left(\frac{1}{N_p}\right) $$ for all $w\in B_\rho$ and where the supremum and infimum are taken over all directions. It suffice to choose the $N_i$'s large enough to conclude.
\end{proof}

If we take $\varepsilon=\rho$ in the lemma above, then, by proposition \ref{B}, we conclude that there exists $w_1 \in B_\rho $ such that $G(w_1)=w_0$. Therefore, the embedding $F_{w_1}$ realises a conformal model of $w_0$.

 \subsection{Higher genus case}
As before, we start with a spacelike embedding $f : \Sigma \rightarrow M$ of a closed smooth surface $\Sigma$ of genus $d \geqslant 2 $ in $M$. We will need, as in the genus one case, a local parametrization of the Teichm\"uller space by smooth Riemannian metrics defined on $\Sigma$. This will be given by the work of Fischer and Tromba \cite{fischer1984purely}. Denote by 
\begin{itemize}
    \item $\mathcal{M}$ the set of Riemannian metrics on $\Sigma$.
    \item $\mathcal{M_{-}}$ the set of Riemannian metrics of curvature $-1$ on $\Sigma$.
    \item $\mathcal{D}$ the group of orientation preserving diffeomorphisms of $\Sigma$.
    \item  $\mathcal{D}_\circ$ the connected component of the identity in $\mathcal{D}$.
    \item $\mathcal{P}$ the group of smooth positive functions $\lambda: \Sigma \rightarrow \R_{>0}$ on $\Sigma$.
    
\end{itemize}
We consider the following actions 
\begin{itemize}
    \item  Pointwise conformally equivalent metrics: 
    $$ \mathcal{P} \times \mathcal{M } \rightarrow \mathcal{M}$$
    $$(\lambda, g) \mapsto \lambda . g $$
    \item Conformally equivalent metrics up to isotopy: 
    $$ \mathcal{M}/\mathcal{P} \times \mathcal{D}_\circ \rightarrow \mathcal{M}/\mathcal{P}$$
    $$ ([g], \varphi ) \mapsto [\varphi^*g]$$
\end{itemize}
Now consider the projection 
$$\pi: \mathcal{M} \rightarrow \mathcal{M}/\mathcal{P}$$
and its restriction 
$$\pi_{-} : \mathcal{M}_{-} \rightarrow \mathcal{M}/\mathcal{P}$$

By the unformization theorem, any Riemannian metric on $\Sigma$ is pointwise conformally equivalent to a unique metric of curvature $-1$. In fact, the maps $\pi_{-}$ is a smooth diffeomorphism (see \cite{fischer1984purely}). 

We will consider now a new action that will give us a metric description of the Teichm\"uller space:
    $$ \mathcal{M}_{-} \times \mathcal{D}_\circ \rightarrow \mathcal{M}_{-}$$
    $$ (g, \varphi ) \mapsto \varphi^*g$$
    
    The Teichm\"uller space $Teich(\Sigma)$ is homeomorphic to the quotient $\mathcal{M}_{-}/\mathcal{D}_\circ$ and therefore, it is homeomorphic to $\frac{\mathcal{M}/\mathcal{P}}{\mathcal{D}_\circ}$. In particular, we get that $\mathcal{M}_{-}/\mathcal{D}_\circ$ is homeomorphic to $\mathbb{R}^{6d-6}$.

    We denote 
    $$\widehat{\pi}: \mathcal{M} \rightarrow \frac{\mathcal{M}/\mathcal{P}}{\mathcal{D}_\circ} \cong \mathcal{M}_{-}/\mathcal{D}_\circ $$
The last ingredient that we need is the following slice theorem:  
\begin{theorem}[\cite{fischer1984purely}]
    For any $g\in\mathcal{M}_{-1}$, there exists a local cross section $\mathcal{Z}_g \subset \mathcal{M}_{-}$ at $g$ for the action of $\mathcal{D}_\circ$. In other words,  there exists a smooth submanifold $\mathcal{Z}_g$ of $\mathcal{M}_{-1}$ containing $g$ such that the restriction:
    $$\widehat{\pi}: \mathcal{Z}_g \rightarrow \mathcal{M}_{-}/\mathcal{D}_\circ $$ is a homeomorphism. 
\end{theorem}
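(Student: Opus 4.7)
The plan is to construct $\mathcal{Z}_g$ as a finite-dimensional submanifold of $\mathcal{M}_{-}$ passing through $g$ and transverse to the $\mathcal{D}_\circ$-orbit of $g$, and then to deduce the local cross-section property (that $\widehat{\pi}$ restricted to $\mathcal{Z}_g$ is a homeomorphism onto a neighborhood of $[g]$) from an implicit function theorem argument. The core analytic ingredient is the Berger--Ebin $L^2$-orthogonal decomposition of symmetric $(0,2)$-tensors on $(\Sigma,g)$.

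First I would identify the two tangent spaces at $g$. Differentiating the constant curvature constraint $K_{g_t}=-1$ along a curve $g_t=g+tk$ yields that $T_g\mathcal{M}_{-}$ consists of symmetric $(0,2)$-tensors $k$ satisfying
$$ -\tfrac{1}{2}\Delta_g(\mathrm{tr}_g k) + \tfrac{1}{2}\, \delta_g\delta_g k + \tfrac{1}{2}\, \mathrm{tr}_g k \;=\; 0.$$
Differentiating the orbit map $\varphi\mapsto\varphi^*g$ at $\varphi=\mathrm{id}$, the tangent to $\mathcal{D}_\circ\cdot g$ is the image of the Lie derivative
$$ L_g\colon X\in\mathcal{X}(\Sigma)\longmapsto \mathcal{L}_X g.$$
The central analytic input is that $L_g^*L_g$ is an elliptic operator modulo lower order terms (with $L_g^*=-2\,\delta_g$), so standard Hodge theory gives the $L^2$-orthogonal decomposition
$$ T_g\mathcal{M}_{-}\;=\;\mathrm{Im}(L_g)\,\oplus\,\mathcal{H}_g,$$
where $\mathcal{H}_g=\ker(L_g^*)\cap T_g\mathcal{M}_{-}$ is the finite-dimensional space of transverse-traceless symmetric $2$-tensors on $(\Sigma,g)$; a Riemann--Roch computation (or the equivalence with holomorphic quadratic differentials) gives $\dim\mathcal{H}_g=6d-6$.

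I would then define $\mathcal{Z}_g$ as the image of a small ball in $\mathcal{H}_g$ under $k\mapsto \pi_{-}(g+k)$, which is a smooth embedding of a $(6d-6)$-dimensional submanifold into $\mathcal{M}_{-}$ since $\pi_{-}$ is a diffeomorphism in a neighborhood of $g$. The slice property then reduces to showing that the gluing map
$$ \Phi\colon \mathcal{Z}_g\times\mathcal{U}\longrightarrow \mathcal{M}_{-},\qquad (k,\varphi)\longmapsto \varphi^*k,$$
where $\mathcal{U}$ is a small neighborhood of the identity in $\mathcal{D}_\circ$, is a local diffeomorphism near $(g,\mathrm{id})$. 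Its differential at $(g,\mathrm{id})$ is $(k,X)\mapsto k+\mathcal{L}_X g$, which is an isomorphism precisely by the Berger--Ebin splitting above. The main obstacle is that $\mathcal{D}_\circ$ is only a Fr\'echet Lie group, so the classical Banach inverse function theorem does not apply directly; the remedy, which is the technical heart of Fischer--Tromba, is to work first in Sobolev completions $\mathcal{M}^s_{-}$ and $\mathcal{D}^s_\circ$ for $s$ large enough that the latter is a Hilbert manifold, apply the Banach inverse function theorem there, and then recover the smooth statement via elliptic regularity applied to $L_g$ across Sobolev scales.
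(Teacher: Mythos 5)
This statement is quoted from Fischer--Tromba and the paper supplies no proof of its own, so the only meaningful comparison is with the cited source; your reconstruction is essentially the standard Fischer--Tromba/Ebin argument and is correct in outline. The Berger--Ebin splitting of $T_g\mathcal{M}_{-}$ into $\mathrm{Im}(L_g)$ plus the $(6d-6)$-dimensional space of transverse-traceless tensors, the slice defined as $\pi_{-}(g+k)$ for small TT tensors $k$ (note that $D\pi_{-}(g)$ is the identity on $\mathcal{H}_g$, since the linearized curvature already vanishes on trace-free divergence-free tensors), the gluing map $\Phi$, and the passage through Sobolev completions $\mathcal{M}^s_{-}$, $\mathcal{D}^s_\circ$ with elliptic regularity to return to the smooth category are exactly the ingredients of the cited proof.

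Two points deserve attention. First, inverting $\Phi$ near $(g,\mathrm{id})$ only shows that $\widehat{\pi}\vert_{\mathcal{Z}_g}$ is open and \emph{locally} injective: it does not exclude that two distinct points of $\mathcal{Z}_g$ are related by a diffeomorphism far from the identity. To conclude injectivity one must invoke the properness of the $\mathcal{D}_\circ$-action on $\mathcal{M}_{-}$ together with its freeness (for genus $d\geq 2$, an isometry of a hyperbolic metric isotopic to the identity is the identity), and shrink $\mathcal{Z}_g$ accordingly; this is a separate lemma in Ebin and in Fischer--Tromba, and your write-up should state it. Second, your construction produces a section over an unquantified neighborhood of $\widehat{\pi}(g)$, whereas the theorem as stated (and as used in the paper, where a ball $B_\rho$ must satisfy $\widehat{\pi}^{-1}(B_\rho)\subset\mathcal{Z}_{g_0}$ for a radius $\rho$ chosen later) is most comfortably justified by the stronger fact, also in Fischer--Tromba, that $\mathcal{M}_{-}\rightarrow\mathcal{M}_{-}/\mathcal{D}_\circ$ is a \emph{trivial} principal $\mathcal{D}_\circ$-bundle over the contractible base $Teich(\Sigma)\cong\mathbb{R}^{6d-6}$, hence admits a global smooth section. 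Neither point invalidates your approach, but both are needed to get from ``$\Phi$ is a local diffeomorphism'' to the homeomorphism claim in the statement.
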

Hence, we have a local parametrization of the Teichm\"uller space by smooth Riemannian metrics. We are now ready to prove Theorem \ref{T1}: 
\medskip

Let $\widehat{\pi}(g_0)$ be a point in $Teich(\Sigma)$, where $g_0 \in \mathcal{M}_{-}$, and let $B_\rho$ be a closed ball centered at $\widehat{\pi}(g_0)$ and of radius $\rho$ such that $\widehat{\pi}^{-1}(B_\rho)\subset \mathcal{Z}_{g_0}$ (for simplicity, we identify $B_\rho$ and $\widehat{\pi}^{-1}(B_\rho)$). We assume that the embedding $f : \Sigma \rightarrow M$ is long with respect to each metric in $B_\rho$. We can always find $\lambda \in \mathbb{R}^{+} $ such that $\lambda f^*h - g $ is  positive definite for all the metrics in $B_\rho$, since both $B_\rho$ and $\Sigma$ are compact.

 We consider a finite set of local parametrizations of $\Sigma$ ($\phi_i: U_i \subset \mathbb{R}^2 \rightarrow V_i \subset \Sigma $) and a partition of unity $\{\rho_i\}$ subordinated to it ($supp(\rho_i \subset V_i$) for all $i\in [\![1,k]\!]$). We define the parametric isometric default: 
$$\Delta(z,g) = (f^*h-g)(z)$$
We note $\Delta_i=: \phi_i^*\Delta|_{supp(\rho_i)}$. By continuity of $\Delta_i$ and compactness of $supp(\rho_i)$ and $B_\rho$,  we can find $\ell_{i,1}\otimes \ell_{i,1} , \ell_{i,2}\otimes \ell_{i,2}, ..., \ell_{i,p_i}\otimes \ell_{i,p_i}$ squares of linear forms such that 
$$\Delta_i = \sum_{j=1}^{{p_i}} \eta_{i,j} \ell_{i,j} \otimes \ell_{i,j}$$ 
where $\eta_{i,j} : supp(\rho_i) \times B_\rho \rightarrow \mathbb{R}^{+} $ is a smooth function. We can use the construction presented in section $2$ to build for each $g\in B_\rho$ a spacelike embedding $F_g : \Sigma \rightarrow M$ such that 
   \begin{equation}\label{equa}
       \left\|F_g^{*}h-g\right\|_{C^{0}}\leq \epsilon
   \end{equation}
for any given $\epsilon>0$. Moreover, since each coefficient function $\eta$ vary continuously on $B_\rho$, the family of loops used in the corrugation process depends continuously on $g$. Thus, the map 
$g\in B_\rho \mapsto F_g^*h \in \mathcal{M}$
is continuous.

The choice of the numbers of corrugations $N_{i,j}$'s can be done uniformly as we are working on compact sets $B_\rho$, $\Sigma$. 

Let $G(g):=\widehat{\pi}({F_g}^*h) \in Teich(\Sigma) $. As in the genus one case, it follows immediately from our construction:
\begin{lemma} We have

     \begin{itemize}
        \item The map $G : B_\rho \rightarrow Teich(\Sigma)$ is continuous. 
        \item  For any $\varepsilon>0$, $d_ {Teich}(G(g) - \widehat{\pi}(g)) \leq \varepsilon$.
    \end{itemize}
\end{lemma}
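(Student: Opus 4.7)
The plan is to mirror the genus one lemma, now set in the local slice $\mathcal{Z}_{g_0}$ whose points are identified with $B_\rho \subset Teich(\Sigma)$. Both assertions should drop out of features already present in the construction: the corrugation process has been assembled parametrically in $g$, with uniform ingredients on the compact set $B_\rho$.

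For continuity, the key point to verify is that every object used to build $F_g$ varies continuously with $g \in B_\rho$. The decomposition $\Delta_i = \sum_j \eta_{i,j}\, \ell_{i,j} \otimes \ell_{i,j}$ uses a fixed family of linear forms $\ell_{i,j}$, and the coefficients $\eta_{i,j}(\cdot, g)$ depend smoothly on $g$. Hence the loop families in each local corrugation (see (\ref{loop}) and Proposition \ref{local}) depend continuously on $g$, and since the corrugation numbers $N_{i,j}$ are chosen once and for all, the iteration $g \mapsto F_g^* h \in \mathcal{M}$ is continuous. Composing with the continuous projection $\widehat{\pi}: \mathcal{M} \to \mathcal{M}_-/\mathcal{D}_\circ \cong Teich(\Sigma)$ then yields continuity of $G$.

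For the distance estimate, I would apply the dilatation bound (\ref{dila}) to the identity map viewed from $(\Sigma, X(g))$ to $(\Sigma, X(F_g^* h))$. By (\ref{equa}) we have $\|F_g^* h - g\|_{C^0} \leq \epsilon$, and by compactness of $B_\rho$ every $g \in B_\rho$ is bounded below by a uniform constant $c>0$ with respect to a fixed background metric on $\Sigma$. Hence, for every nonzero tangent vector $v$,
$$\left| \frac{F_g^* h(v,v)}{g(v,v)} - 1 \right| \leq \frac{\epsilon}{c},$$
uniformly in $g\in B_\rho$, in $z\in \Sigma$, and in direction. It follows that $Dil(id) = 1 + O(\epsilon)$ and therefore, by (\ref{dila}),
$$d_{Teich}(G(g),\, \widehat{\pi}(g)) \leq \tfrac{1}{2} \log Dil(id) = O(\epsilon).$$
Taking the $N_{i,j}$ large enough at the outset makes this bound smaller than any prescribed $\varepsilon>0$.

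The only mildly delicate point will be keeping the dilatation estimate uniform in $g\in B_\rho$ and $z\in\Sigma$ simultaneously; this uses the compactness of $\Sigma \times B_\rho$, which also underpins the uniform choice of the corrugation numbers in Section $2$. Apart from that, the argument is bookkeeping essentially identical to the genus one case, with the pointwise parametrization $w \in \mathbb{H}^2$ replaced by the slice parametrization $g \in \mathcal{Z}_{g_0}$.
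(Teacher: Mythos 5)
Your proposal is correct and follows essentially the same route as the paper: continuity of $G$ from the continuous dependence of all corrugation data on $g$ composed with the continuity of $\widehat{\pi}$, and the distance estimate by converting the $C^0$ bound (\ref{equa}) into a dilatation bound and invoking (\ref{dila}). Your explicit use of a uniform lower bound $c>0$ on $g$ over the compact set $B_\rho\times\Sigma$ to pass from the $C^0$ estimate to the ratio estimate is a small detail the paper leaves implicit, but it is not a different argument.
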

\begin{proof}
    As before, by construction, the following maps:
    $$  g\in B_\rho \longmapsto  F_w^{*}h \in \mathcal{M}$$
    $$g \in \mathcal{M} \longmapsto \widehat{\pi}(g) \in \frac{\mathcal{M}/\mathcal{P}}{\mathcal{D}_\circ} \cong Teich(\Sigma) $$
    are continuous. 

    Using inequality \ref{equa}, the dilatation of the identity map between the Riemann surface structure defined by $g$ and the one defined by $F_g^{*}h$ is: 
    $$K^{2} = \sup_{z\in \Sigma}\left(\sup\frac{F_g^{*}h}{g}/\inf \frac{F_g^{*}h}{g} \right)\leq 1 + \epsilon$$ 
    for all $g\in B_\rho$ and where the supremum and infimum are taken over all directions. Choosing $\epsilon$ small enough, we conclude by Remark \ref{dila}.
\end{proof}

Choosing $\varepsilon=\rho$ in the above lemma, we conclude by proposition \ref{B} that there exists $g_1 \in B_\rho $ such that $G(g_1)=\widehat{\pi}(g_0)$. Therefore, the embedding $F_{g_{1}}$ realizes a conformal model of $\widehat{\pi}(g_0)$. This conclude the proof of Theorem \ref{T1}.
  \section{Proof of Theorem \ref{T2}}
In this section we present the proof of Theorem \ref{T2} and Corollary \ref{C2}. We recall before that some notations:

 Let $\mathcal{T}$ be the open solid timelike cone in the Minkowski space of dimension $2+1$ .i.e
$$\mathcal{T}= \{ (x,y,z)\in \mathbb{R}^{2,1} |  \; x^{2}+y^{2} < z^{2} \; ; \; z> 0 \}$$
Remember that $\mathcal{T}$ with the induced Lorentzian metric is isometric to $\mathbb{H}^{2}\times \mathbb{R}_{>0}$ with the metric $r^{2}h-dr^{2}$, where $h$ is the hyperbolic metric on $\mathbb{H}^{2}$. For simplicity, we denote by $\mathbb{H}^{2}_r$ the level $\mathbb{H}^{2}\times \{r\}$.

 Let $\Gamma$ be a cocompact lattice of the group $SO^{\circ}(2,1)$. Since $\Gamma$ acts by isometries on $\mathcal{T}$ and preserves the levels $\mathbb{H}^{2}_r$, the quotient $\mathcal{T}/ \Gamma $ is a warped product $(S\times \mathbb{R}_{>0}, r^{2}h_\Gamma-dr^{2})$, where $S=\mathbb{H}^{2}_1/\Gamma$ is a closed hyperbolic surface. We will note by $\Sigma$ the supporting topological surface of genus $d\ge 2$ and by $S_r$ the level $S=\mathbb{H}^{2}_r/\Gamma$.
\\ Following the notations of the previous section, we consider:
\begin{itemize}
    \item $\mathcal{M}_{<0}$ the set of Riemannian metrics on $\Sigma$ of negative curvature.
    \item $\widetilde{B}(\Sigma)$ the set of metrics in $\mathcal{M}_{<0}$ that can be isometrically embedded in $\mathcal{T}/\Gamma$.
    \item $B(\Sigma)$ the projection of $\widetilde{B}(\Sigma)$ in $Teich(\Sigma)$.
\end{itemize}
Recall that we want to prove the following:
\begin{theorem*}\label{T2*}
    Any $g\in \widetilde{B}(\Sigma)$ is up to scaling, quasi-isometric to the hyperbolic metric $h_\Gamma$ with universal coefficients. More precisely, there exists $C>1$ that depend only on $\Gamma$, such that : 
    $$\frac{1}{C} h_\Gamma \le g \le C h_\Gamma$$. 
\end{theorem*}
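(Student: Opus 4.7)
\textbf{Proof plan for Theorem \ref{T2}.}
The strategy is to reformulate an isometric embedding as a $\Gamma$-invariant graph, normalise by the homothety symmetry of $\mathcal{T}$, and then bound separately the two eigenvalues of $g$ relative to $h_\Gamma$.

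Step 1 (graph form). Any isometric embedding $\iota:(\Sigma,g)\hookrightarrow\mathcal{T}/\Gamma$ lifts to a $\Gamma$-equivariant isometric embedding $\widetilde{\iota}:\widetilde{\Sigma}\to\mathcal{T}$. Since $\mathcal{T}\subset\mathbb{R}^{2,1}$ is flat, the Gauss equation reads $K_g=-\det(II)/\det(g)$, so $K_g<0$ forces the second fundamental form $II$ to be strictly definite; that is, $\widetilde{\iota}(\widetilde{\Sigma})$ is strictly convex. Because $\Gamma\subset SO^{\circ}(2,1)$ acts on $\mathcal{T}$ preserving the radial coordinate, this spacelike, strictly convex, $\Gamma$-invariant surface with compact quotient may be written as the graph of a smooth positive function $u:\mathbb{H}^2\to\mathbb{R}_{>0}$, and $\Gamma$-equivariance of the embedding is then equivalent to $\Gamma$-invariance of $u$. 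Pulling back the warped-product metric gives
\[
g = u^{2}h - du\otimes du,
\]
with spacelikeness reading $|\nabla^{h}u|_{h}<u$. Diagonalising $g$ against $h$ one sees that the eigenvalues are $u^{2}$ (orthogonal to $\nabla^{h}u$) and $w^{2}:=u^{2}-|\nabla^{h}u|_{h}^{2}$ (along $\nabla^{h}u$), so the theorem amounts to two-sided estimates $\delta(\Gamma)\,u^{2}\le w^{2}$ and $u\le C_{1}(\Gamma)$ after a normalisation.

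Step 2 (normalisation and upper bound on $u$). The homothety $(x,r)\mapsto(x,\lambda r)$ of $\mathcal{T}$ rescales the ambient metric by $\lambda^{2}$ and sends $u$ to $\lambda u$, so I normalise $\min_{\mathbb{H}^{2}}u=1$; the minimum is attained by $\Gamma$-invariance and compactness of $S$. The condition $|\nabla^{h}u|_{h}<u$ means $\log u$ is strictly $1$-Lipschitz on $(\mathbb{H}^{2},h)$, so integrating along a diameter-realising geodesic of $S$ gives $\log\max u\le\mathrm{diam}(S,h_\Gamma)$. Hence $u\le C_{1}:=e^{\mathrm{diam}(S,h_\Gamma)}$, a constant depending only on $\Gamma$, and therefore $g\le u^{2}h\le C_{1}^{2}h_\Gamma$, which is one side of the quasi-isometry and uses nothing more than spacelikeness.

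Step 3 (uniformly spacelike tangent planes). For the other side I must exhibit $\delta=\delta(\Gamma)>0$ with $w^{2}\ge\delta\,u^{2}$, which gives $g\ge w^{2}h\ge\delta\,h_\Gamma$. Viewing the future-pointing unit timelike normal $\mathcal{N}(x)\in\mathbb{H}^{2}\subset\mathbb{R}^{2,1}$, a short computation in the hyperboloid model yields the identity
\[
\cosh d_{\mathbb{H}^{2}}\!\bigl(\mathcal{N}(x),x\bigr)=\frac{u(x)}{w(x)},
\]
so the desired lower bound on $w/u$ is equivalent to a uniform upper bound, depending only on $\Gamma$, on the $\Gamma$-invariant displacement function $x\mapsto d_{\mathbb{H}^{2}}(\mathcal{N}(x),x)$. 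Strict convexity makes $\mathcal{N}$ a local diffeomorphism and $\Gamma$-equivariance forces the quotient $\bar{\mathcal{N}}:S\to S$ to be a diffeomorphism homotopic to the identity. I would derive the displacement bound by applying the maximum principle to the $\Gamma$-invariant function $\varphi:=|\nabla^{h}u|_{h}^{2}/u^{2}$ on $S$: at an interior maximum one has $\nabla\varphi=0$ and $\Delta^{h}\varphi\le 0$, and, writing $\det(II)>0$ explicitly in terms of $u$, $\nabla u$, $\mathrm{Hess}\,u$ and the background curvature $-1$ of $h$, these relations must force an upper bound on $\varphi$ expressed in the hyperbolic geometry of $S$ alone.

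Main obstacle and corollary. The whole substance of the theorem sits in Step~3: converting the negative-curvature inequality $K_g<0$, a fully nonlinear PDE on $u$, into a quantitative bound on the ``angle'' between the tangent planes of $\widetilde{\iota}(\widetilde{\Sigma})$ and the light cones. The upper bound on $u$ in Step~2 is essentially soft and requires only spacelikeness; Step~3 is the genuine analytic estimate, and the delicate point is that the bound must depend only on $\Gamma$ (through $\mathrm{diam}(S,h_\Gamma)$ and similar invariants) rather than on any a priori curvature bound of $g$, which is not assumed. Once the quasi-isometry $C^{-1}h_\Gamma\le g\le Ch_\Gamma$ is established, Corollary \ref{C2} is immediate: the dilatation estimate \eqref{dila} of Section~3 bounds $d_{Teich}(\widehat{\pi}(g),\widehat{\pi}(h_\Gamma))$ by $\log C$, confining $B(\Sigma)$ to a closed ball in $Teich(\Sigma)$ and hence making it relatively compact.
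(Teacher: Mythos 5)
Your Step 1 matches the paper's setup exactly (Mess's theorem gives an entire convex graph, and the surface is also a graph $u$ over $\H^{2}$ with induced metric $u^{2}h-du\otimes du$). Your Step 2 is correct and is in fact a genuinely simpler route to the upper bound than the paper's: the paper normalises the surface to be tangent to $\H^{2}_1$ from above and then bounds the level function on the boundary of the convex hull of the orbit $\Gamma\cdot p$ of the tangency point, using cocompactness and Carath\'eodory; your observation that spacelikeness alone makes $\log u$ strictly $1$-Lipschitz on $(\H^{2},h)$, hence $\log\max u-\log\min u\le \mathrm{diam}(S,h_\Gamma)$, reaches the same trapping $1\le u\le\alpha$ with less machinery. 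Both arguments deliver the same conclusion and the constant depends only on $\Gamma$ either way.

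Step 3, however, has a genuine gap, and you have correctly identified it as the heart of the theorem without closing it. Your proposed maximum-principle computation on $\varphi=|\nabla^{h}u|^{2}_{h}/u^{2}$ is not carried out, and it is doubtful it can work as stated: the only analytic input you have is $\det(II)>0$, which is an open condition with no quantitative content (the curvature of $g$ may be arbitrarily close to $0$), so at a maximum of $\varphi$ the relations $\nabla\varphi=0$, $\Delta^{h}\varphi\le 0$ together with $\det(II)>0$ do not obviously force a bound on $\varphi$ depending only on $\Gamma$. The idea you are missing is that convexity should be used \emph{globally}, not infinitesimally: since $\widetilde{F}(\widetilde{\Sigma})$ is the graph of a convex function over $\R^{2}$ (in the Minkowski splitting), the affine tangent plane at any point is a supporting plane lying entirely in the past of the surface, hence in the past of $\H^{2}_{\alpha}$. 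An elementary computation in a vertical section then shows that a spacelike plane through $(0,0,r)$ with $1\le r\le\alpha$ whose slope $a$ satisfies $a^{2}>1-\tfrac{1}{\alpha^{2}}$ must cross $\H^{2}_{\alpha}$, a contradiction. This yields the uniform bound $|\nabla^{h}u|^{2}_{h}/u^{2}\le 1-\tfrac{1}{\alpha^{2}}$, i.e. $w^{2}\ge \tfrac{1}{\alpha^{2}}u^{2}\ge \tfrac{1}{\alpha^{2}}$, purely from the two-sided trapping of Step 2 plus the supporting-plane property — no PDE estimate is needed. Your Gauss-map reformulation via $\cosh d_{\H^{2}}(\mathcal{N}(x),x)=u/w$ is a correct restatement of what must be proved, but note that the soft fact that $\bar{\mathcal{N}}$ is homotopic to the identity only bounds the displacement by a constant depending on the particular embedding, not uniformly over $\widetilde{B}(\Sigma)$, so it cannot substitute for the convexity argument.
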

\begin{corollary*}
    The set $B(\Sigma)$ is relatively compact. 
\end{corollary*}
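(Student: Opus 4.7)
My plan is to prove Theorem~\ref{T2} by lifting to the universal cover and then extracting two pointwise estimates—a \emph{height bound} on the graph and a \emph{tilt bound} on its future-directed timelike normal—both of which reduce to hyperbolic trigonometry combined with cocompactness of $\Gamma$. The Corollary then follows at once from the dilatation inequality~(\ref{dila}) of Section~3. Setup: for $g\in\widetilde{B}(\Sigma)$ and an isometric embedding $f:(\Sigma,g)\hookrightarrow\mathcal{T}/\Gamma$, lift to a $\Gamma$-equivariant isometric embedding $\tilde f:\tilde\Sigma\to\mathcal{T}$; as noted in the introduction, $K_g<0$ forces $\tilde f(\tilde\Sigma)$ to be the graph of a convex $\Gamma$-invariant function $u:\mathbb{H}^2\to\mathbb{R}_{>0}$, and writing it in Minkowski coordinates as $\hat p\mapsto u(\hat p)\hat p$ a direct computation (using $\langle\hat p,v\rangle=0$ for $v\in T_{\hat p}\mathbb{H}^2$) gives
\[
g = u^2 h - du\otimes du.
\]
Since the Minkowski homothety $X\mapsto cX$ commutes with $\Gamma\subset SO^\circ(2,1)$ and rescales $g$ by $c^2$, I may freely normalize $u_{\min}:=\min u=1$.

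For the height bound, at any critical point of $u$ the graph is tangent to a hyperbolic level, so the future unit normal is radial: $\mathbf{n}=\hat p$ at a minimum $\hat p$ and $\mathbf{n}=\hat q$ at a maximum $\hat q$. Convexity yields the support-plane inequality $\langle X-q,\mathbf{n}(q)\rangle\le 0$ for every $X$ on the graph and every $q$ on the graph; applying it at $q=u_{\max}\hat q$ to $X=\hat p$, and using $\langle\hat p,\hat q\rangle=-\cosh d_{\mathbb{H}^2}(\hat p,\hat q)$, produces
\[
u_{\max}\le u_{\min}\cosh d_{\mathbb{H}^2}(\hat p,\hat q).
\]
By $\Gamma$-equivariance the extrema lie in a common fundamental domain, so $d_{\mathbb{H}^2}(\hat p,\hat q)\le D:=\mathrm{diam}(\mathbb{H}^2/\Gamma)$, giving $1\le u\le\cosh D$ on all of $\mathbb{H}^2$.

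For the tilt bound, at an arbitrary $q=u(\hat q)\hat q$ I decompose the future unit normal as $\mathbf{n}(q)=\cosh\theta\,\hat q+\sinh\theta\,\xi$ with $\xi\in T_{\hat q}\mathbb{H}^2$ hyperbolic-unit. Testing Minkowski-orthogonality against the tangent vectors $uv+du(v)\hat q$ identifies $\xi=\nabla_h u/|\nabla_h u|_h$ and $\tanh\theta=|du|_h/u$; furthermore $\cosh\theta=-\langle\hat q,\mathbf{n}(q)\rangle$, so $\theta=d_{\mathbb{H}^2}(\hat q,\mathbf{n}(q))$ where $\mathbf{n}(q)$ is viewed as a point of $\mathbb{H}^2_1$. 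Applying the support-plane inequality at $q$ to translates $\gamma\hat p$ of the minimum point and minimizing over $\gamma\in\Gamma$ yields $u(\hat q)\cosh\theta\le\cosh D$; with $u\ge 1$ this forces $\theta\le D$, i.e.\ $|du|_h/u\le\tanh D<1$ uniformly.

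Plugging both estimates into $g=u^2 h-du\otimes du$ gives
\[
\tfrac{1}{\cosh^2 D}\,h_\Gamma\;\le\;(u^2-|du|_h^2)\,h_\Gamma\;\le\;g\;\le\;u^2\,h_\Gamma\;\le\;\cosh^2 D\cdot h_\Gamma,
\]
which is Theorem~\ref{T2} with $C=\cosh^2 D$. For Corollary~\ref{C2}, this quasi-isometric bound makes the dilatation of the identity map between the conformal classes of $g$ and $h_\Gamma$ at most $C$, so~(\ref{dila}) confines $B(\Sigma)$ to the closed Teichmüller ball of radius $\tfrac{1}{2}\log C$ about $[h_\Gamma]$, which is relatively compact. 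I anticipate the main obstacle to be promoting the local convexity of the embedding (arising pointwise from $K_g<0$ via the Gauss equation in flat Minkowski) to the global support-plane inequality exploited in both bounds: in other words, the statement that a smooth $\Gamma$-equivariant locally strictly convex spacelike Cauchy graph in $\mathcal{T}$ is globally convex in the Minkowski sense. Once this is secured, everything else is cocompactness of $\mathbb{H}^2/\Gamma$ plus hyperbolic identities.
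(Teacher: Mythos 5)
Your deduction of the corollary from Theorem \ref{T2} (bound the dilatation of the identity between the conformal classes of $g$ and $h_\Gamma$ by $C$, then use inequality (\ref{dila}) to trap $B(\Sigma)$ in a Teichm\"uller ball of radius $\tfrac12\log C$) is exactly the paper's argument. What differs is your proof of Theorem \ref{T2} itself, and it is a genuinely different and in some ways cleaner route. The paper obtains the height bound (Proposition \ref{pr1}) by taking the convex hull of the orbit $\Gamma\cdot p$ of a tangency point with $\H^2_1$, passing to the Klein model to show $\partial\mathcal H_p$ is a cocompact $\Gamma$-space, and extracting a uniform level bound $\alpha$; it then obtains the lower bound (Proposition \ref{pr2}) by a planar intersection argument showing that a tangent plane meeting $\H^2_\alpha$ would contradict the height bound. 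You replace both steps by a single mechanism: the global support-plane inequality $\langle X-q,\mathbf n(q)\rangle\le 0$ tested against $\Gamma$-translates of a minimum point, combined with $\langle \hat p,\hat q\rangle=-\cosh d_{\H^2}(\hat p,\hat q)$ and the normal decomposition $\mathbf n=\cosh\theta\,\hat q+\sinh\theta\,\xi$ with $\tanh\theta=|du|_h/u$. This yields the quantitative estimate $u\cosh\theta\le\cosh D$ with $D=\mathrm{diam}(\H^2/\Gamma)$, hence the explicit constant $C=\cosh^2 D$, which the paper's compactness arguments do not produce. The one point you flag as a potential obstacle --- promoting local strict convexity to the global support-plane inequality --- is not actually a gap, and is resolved exactly as in the paper's preliminaries: $\Gamma$-invariance plus cocompactness make the induced metric complete, so by Mess's theorem the lift is an entire spacelike graph $z=u(x,y)$ over $\R^2$; the curvature formula then shows the Hessian of this graph function is definite, and an entire $C^2$ function with positive semidefinite Hessian is globally convex, which is precisely the support-plane inequality in Minkowski form. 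Two cosmetic points: the extrema of your radial height function exist because it is $\Gamma$-invariant and $\Gamma$ is cocompact (rather than because "the extrema lie in a common fundamental domain" --- you should say that you may replace $\hat q$ by a $\Gamma$-translate at distance at most $D$ from $\hat p$), and your $u$ over $\H^2$ should be distinguished notationally from the graph function over $\R^2$ used for the convexity argument.
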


\begin{proof}[Proof of Corollary]
    Let $g\in \widetilde{B}(\Sigma)$ and denote by $X(h_\Gamma)$ and $X(g)$ the complex structure on $\Sigma$ induced by $h_\Gamma$ and $g$ respectively. By Remark \ref{dila}, the dilatation of the identity map of $\Sigma$ seen as quasiconformal map between $X(h_\Gamma)$ and $X(g)$ is given by:  $$ K=\sup_{z\in\Sigma}\sqrt{\sup\frac{g}{h_\Gamma}\bigg/\inf \frac{g}{h_\Gamma}}$$
 where the supremum and infimum are taken over all directions. By theorem \ref{T2} above, we have :
 $$K^{2} \leq C^{2}$$
 In particular, the Teichm\"uller distance:
  $$d_{Teich}((X(g),id),(X(h_\Gamma),id))\le \frac{1}{2} \log(C^{2})$$ 
  We conclude that $B(\Sigma)$ is included in the ball of center $(X(h_\Gamma),id)$ and of radius $\frac{1}{2} \log(C^{2})$. In particular, $B(\Sigma)$ is relatively compact.
\end{proof}
 
 In order to prove theorem \ref{T2}, we first reformulate the problem into an equivariant one. Let $g\in \widetilde{B}(\Sigma)$, and $F:(\Sigma,g) \rightarrow \mathcal{T}/\Gamma$ be an isometric embedding of $(\Sigma,g)$. Lifting the metric $g$ into a $\Gamma$-invariant metric $\widetilde{g}$ on the universal cover $\widetilde{\Sigma}$, this is equivalent to the existence of a $\Gamma$-equivariant isometric embedding $$\widetilde{F} : (\widetilde{\Sigma}, \widetilde{g}) \rightarrow \mathcal{T}$$
with respect to the group isomorphism $\varrho : \pi_1(\Sigma) \rightarrow \Gamma$, this means that $$\widetilde{F}(\gamma . x) = \varrho(\gamma) . \widetilde{F}(x).$$

 We will now prove some facts about this embedding:
 
  A theorem of Mess \cite{mess2007lorentz} asserts that if the induced metric by a spacelike immersion in the Minkowski space is complete, then the image of this immersion is a spacelike entire graph, i.e. it is of the form $\{(x,y,u(x,y))\; \; x,y\in \R \} $ for some function $u:\mathbb{R}^{2} \rightarrow \mathbb{R}$. In our case, the induced metric $\widetilde{g}$ is $\Gamma$-invariant, and since $\Gamma$ is cocompact, this implies completeness. Therefore, $\widetilde{F}(\widetilde{\Sigma})$ is the graph in $\R^{2,1}$ of some smooth function that we call $u$.
  
  We have the following formula of the curvature:
$$\kappa(\widetilde{g}) = - \frac{\partial_{xx}u \partial_{yy}u - \partial_{xy}u^{2}}{(1-\partial_x u^{2}-\partial_y u^{2})^{2}}$$
and since $\kappa(\widetilde{g})$ is negative, this implies that $u$ is strictly convex; this is why we call $\widetilde{F}$ a convex embedding. Since we want to compare the metrics $g$ and $h_\Gamma$, we will need the following lemma: 

\begin{lemma}
   The image of $\widetilde{F}$ in $\mathbb{H}^{2}\times \mathbb{R}_{>0}$ is a graph of some function over $\H^{2}$.
\end{lemma}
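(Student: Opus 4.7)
The plan is to reformulate the statement as: the radial projection $\pi: \mathcal{T}\to\H^2$, $v\mapsto v/\sqrt{z^2-x^2-y^2}$ (which is exactly the projection onto the first factor in $\mathcal{T}\cong\H^2\times\R_{>0}$), restricts to a bijection $\widetilde{F}(\widetilde{\Sigma})\to\H^2$. For $q=(q_1,q_2,q_3)\in\H^2$, the fiber $\pi^{-1}(q)$ is the timelike ray $\{tq:t>0\}$, and its intersection with the graph $z=u(x,y)$ is recorded by zeros of
$$\varphi_q(t) := u(tq_1,tq_2) - tq_3, \qquad t>0.$$
The map $v:\H^2\to\R_{>0}$ sending $q$ to its unique positive zero will then define the desired function.

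For injectivity, I would show $\varphi_q$ is strictly decreasing. A direct computation gives $\varphi_q'(t) = q_1 u_x + q_2 u_y - q_3$ (evaluated at $(tq_1,tq_2)$). The Cauchy--Schwarz inequality, combined with the spacelike condition $|\nabla u|<1$ (valid pointwise on $\R^2$ since $\widetilde{F}$ is spacelike everywhere), yields
$$|q_1 u_x + q_2 u_y| \leq \sqrt{q_1^2+q_2^2}\,|\nabla u| \leq \sqrt{q_1^2+q_2^2} < q_3,$$
the last strict inequality coming from $q_3 = \sqrt{1+q_1^2+q_2^2}$. Thus $\varphi_q'<0$ everywhere, and $\varphi_q$ has at most one zero.

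For the existence of a zero, I would exploit the $\Gamma$-equivariance of $\widetilde{F}$. Since $\Gamma$ acts by Minkowski isometries on $\mathcal{T}$ it preserves the Minkowski norm, so $\sqrt{u^2-x^2-y^2}$ descends to a continuous strictly positive function on the compact quotient $F(\Sigma)$. This produces constants $0<r_1\leq r_2$ with
$$\sqrt{r_1^2+x^2+y^2} \leq u(x,y) \leq \sqrt{r_2^2+x^2+y^2} \quad\text{on } \R^2.$$
The left bound gives $\varphi_q(0)=u(0,0)\geq r_1>0$, while the right bound gives $\varphi_q(t)\leq \sqrt{r_2^2+t^2(q_1^2+q_2^2)} - tq_3 \to -\infty$ as $t\to\infty$, using $q_3>\sqrt{q_1^2+q_2^2}$. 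The intermediate value theorem then supplies the unique zero $v(q)>0$; smoothness of $v$ in $q$ is immediate from the implicit function theorem since $\partial_t\varphi_q<0$.

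The main obstacle is surjectivity. Injectivity follows cleanly from the pointwise spacelike estimate alone, but surjectivity genuinely needs the two-sided Minkowski-norm bound furnished by cocompactness of $\Gamma$: without such a uniform control, the graph $\{z=u(x,y)\}$ could in principle asymptote into $\partial\mathcal{T}$ from inside and miss some timelike rays, so the equivariance really is essential at this step.
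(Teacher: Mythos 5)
Your proof is correct, but it takes a genuinely different route from the paper's. The paper works downstairs: it observes that the projection $pr_1\colon S\times\R_{>0}\to S$ has timelike kernel, so $pr_1\circ F$ is a local diffeomorphism from the compact connected $\Sigma$, hence a covering map, and then invokes the genus $\geq 2$ hypothesis (via Euler characteristic) to conclude the degree is one. You instead work upstairs with the entire graph $z=u(x,y)$ furnished by Mess's theorem and analyze the intersection of each timelike ray with the graph directly: uniqueness from the pointwise spacelike bound $|\nabla u|<1$ making $\varphi_q$ strictly decreasing, and existence from the two-sided bound $r_1\leq\sqrt{u^2-x^2-y^2}\leq r_2$ supplied by $\Gamma$-equivariance and compactness of $\Sigma$. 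Both arguments are sound. The paper's is shorter and purely topological, but it leans on the genus hypothesis; yours avoids any appeal to covering-space degree counting (so it would survive in settings where the covering argument is unavailable), at the cost of using the entire-graph property and the cocompactness bound explicitly. As a bonus, your existence step already establishes that the image is trapped between two levels $\H^2_{r_1}$ and $\H^2_{r_2}$, which anticipates Proposition 5.6 of the paper — though note your $r_1,r_2$ depend on the particular metric $g$, whereas the proposition requires constants depending only on $\Gamma$, so it does not supersede that later argument. Your closing remark that surjectivity genuinely requires such uniform control (an entire spacelike graph inside $\mathcal{T}$ need not meet every timelike ray) is a fair and worthwhile observation.
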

\begin{proof}
This is equivalent to the fact that $F$ is a graph of some function over $S$ in $S\times \R_{>0}$. 

Since the kernel of the differential of the projection $pr_1: S\times \R_{>0} \rightarrow S $ is timelike and $F$ is a spacelike embedding, we get by the inverse function theorem that $pr_1\circ F$ is a local diffeomorphism. In fact, $pr_1\circ F$ is a covering map (since $\Sigma$ is compact and connected). We conclude that $pr_1\circ F$ is a diffeomorphism, since $\Sigma$ has genus $\geq 2$. 
\end{proof}

  We are now ready to prove Theorem \ref{T2}, we recall that the goal is to find $1< C$, such that for any $g\in \widetilde{B}(\Sigma)$, we have 
  $$\frac{1}{C} h_\Gamma \le g \le C h_\Gamma.$$
  
  The main idea here is to prove that that there exists a number $\alpha$ that depends only on $\Gamma$, such that the image of any convex embedding is trapped between two levels $\H^{2}_r$ and $\H^{2}_{\alpha r}$, equivalently in $\mathcal{T}/\Gamma$, the image of any convex embedding is bounded by two levels $S_r:=\H^{2}_r/\Gamma$ and $S_\alpha:=\H^{2}_{\alpha r}/\Gamma$. More precisely, we have the following proposition: 
 \begin{proposition}\label{pr1}
     There exists $\alpha>1$ that depends only on $\Gamma$, such that, for any $g \in \widetilde{B}(\Sigma)$, up to rescaling, $F(\Sigma)$ is bounded by $S_\alpha$ and $S_1$. In particular, we have for $C=\alpha^{2}$:
    $$g \le C h_\Gamma.$$
 \end{proposition}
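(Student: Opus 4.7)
The plan is to recognise that the spacelike condition in the warped-product coordinates on $\mathcal{T}$ translates directly into a Lipschitz condition on the function defining the graph, and then to use the compactness of $S$ to get a universal bound. By the lemma just proved, $\widetilde{F}(\widetilde{\Sigma})$ is the graph of a smooth, positive, $\Gamma$-invariant function $r : \mathbb{H}^{2} \to \mathbb{R}_{>0}$; in particular $r$ descends to a continuous function on the compact hyperbolic surface $S = \mathbb{H}^{2}/\Gamma$. Proposition \ref{pr1} thus reduces to bounding $\max_{S} r / \min_{S} r$ by a constant depending only on $\Gamma$.

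To carry this out, I would first compute the induced metric on the graph. A tangent vector at $(p, r(p))$ has the form $v + dr_{p}(v)\, \partial_{r}$ for $v \in T_{p}\mathbb{H}^{2}$, so the ambient warped metric $r^{2}h - dr^{2}$ pulls back to
$$\widetilde{g} \;=\; r^{2} h \,-\, dr \otimes dr.$$
Positive-definiteness of $\widetilde{g}$ (the spacelike condition) is then exactly equivalent to $|dr|_{h} < r$ pointwise, i.e.\ $|d \log r|_{h} < 1$. Integrating this inequality along any hyperbolic geodesic, the function $\log r$ is $1$-Lipschitz on $(\mathbb{H}^{2}, h)$, and hence also on $(S, h_{\Gamma})$.

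Compactness of $S$ then yields
$$\log\!\Bigl(\max_{S} r\Bigr) \,-\, \log\!\Bigl(\min_{S} r\Bigr) \;\le\; \mathrm{diam}(S, h_{\Gamma}),$$
and I would set $\alpha := e^{\mathrm{diam}(S, h_{\Gamma})}$, which depends only on $\Gamma$. Using the homothety $(p, r) \mapsto (p, \lambda r)$ of $\mathcal{T}$, which commutes with the action of $\Gamma$ and multiplies the ambient metric by $\lambda^{2}$, I would rescale so that $\min_{S} r = 1$; then $F(\Sigma)$ lies between the levels $S_{1}$ and $S_{\alpha}$, as claimed. For the metric comparison, $dr \otimes dr$ is positive semi-definite, so $\widetilde{g} \le r^{2}\, h \le \alpha^{2}\, h$, which descends to $g \le \alpha^{2}\, h_{\Gamma}$; taking $C = \alpha^{2}$ completes the statement.

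The only conceptual point is the passage from \emph{spacelike graph in the warped product} to \emph{$1$-Lipschitz logarithm}; after that everything is immediate, and the main ``obstacle'' is simply identifying the right coordinates. Note that convexity of $\widetilde{F}$ plays no role in this upper bound; I expect it will be essential in the second half of Theorem \ref{T2}, where the reverse inequality requires controlling how far from horizontal the tangent planes to $\widetilde{F}(\widetilde{\Sigma})$ can tilt.
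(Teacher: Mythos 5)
Your proof is correct, and it takes a genuinely different — and considerably more elementary — route than the paper. The paper normalizes so that the graph is tangent to $\H^{2}_1$ at a point $p$, forms the closed convex hull $\mathcal{H}_p$ of the orbit $\Gamma\cdot p$, uses convexity of $u$ to trap $\widetilde{F}(\widetilde{\Sigma})$ below $\partial\mathcal{H}_p$, and then bounds the level function on $\partial\mathcal{H}_p$ via the Klein model, a separate homeomorphism lemma for the radial projection $\varphi_0$, cocompactness, and Carath\'eodory's theorem. You instead read the spacelike condition directly in the warped-product coordinates: the induced metric on the graph of $r$ is $r^{2}h - dr\otimes dr$, so positivity forces $|d\log r|_{h}<1$, i.e.\ $\log r$ is $1$-Lipschitz, and compactness of $S=\H^{2}/\Gamma$ (to which $r$ descends by equivariance) gives the explicit bound $\alpha = e^{\mathrm{diam}(S,h_\Gamma)}$; the homothety normalization and the inequality $\widetilde g \le r^{2}h \le \alpha^{2}h$ are then immediate. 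Your observation that convexity is not needed for this upper bound is accurate — your argument applies to \emph{any} equivariant spacelike graph, not just convex ones, and it yields an explicit constant, whereas the paper's convex-hull machinery is heavier and is not actually reused in the proof of the lower bound (Proposition \ref{pr2} only uses the conclusion $1\le r\le\alpha$ together with convexity of $u$). The one point worth stating explicitly if you write this up is that the graph function is indeed $\Gamma$-invariant (it follows from equivariance of $\widetilde F$ and the fact that $\Gamma$ preserves the levels $\H^{2}_r$), so that the Lipschitz estimate can be closed up on the compact quotient.
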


\begin{proof}
    The inequality is a straightforward consequence. As proven above, the image of the embedding $\widetilde{F}$ is a graph over $\H^{2}$:
$$ \{(x,y,f(x,y) | \;\; (x,y)\in \H^{2} \;\; \text{and } \;\; f:\H^{2}\rightarrow \R_{>0} \} $$
If such an $\alpha$ exists, then up to scaling: $f(\H^{2})\subset [1,\alpha]$. Hence, the induced metric by $\widetilde{F}$ (up to identifying $\mathcal{T}$ with $\H^{2}\times \R_{>0}$) satisfies: 
$$f^{2}h-df^{2} \leq \alpha^{2} h$$
where $h$ is the hyperbolic metric on $\H^{2}$.

 By equivariance, we need to prove that the image of $\widetilde{F}$ is in the past of $\mathbb{H}^{2}_\alpha$ and in the future of $\mathbb{H}^{2}_1$.

Up to rescaling, we can always assume that $\widetilde{F}(\widetilde{\Sigma})$ is in the future of $\mathbb{H}^{2}_1$ and is tangent to it at some point $p$ and therefore at all the points of the orbit $\Gamma . p$. Consider now the closed convex hull of $\Gamma . p$ denoted by $\mathcal{H}_p$ and let $\partial \mathcal{H}_p$ be its boundary. Since $\Gamma$ acts by linear transformations and $\Gamma . p$ is $\Gamma$-invariant, $\mathcal{H}_p$ is $\Gamma$-invariant and so is $\partial \mathcal{H}_p$.

 The level function $\mathcal{T}\cong \H^{2}\times \R_{>0} \rightarrow \R_{>0} $ is a continuous function. We want to prove that its restriction on $\widetilde{F}(\widetilde{\Sigma})$ is bounded. Since $\widetilde{F}(\widetilde{\Sigma})$ contains $\Gamma.p$ and it is the graph of a convex function $u$, it is in the past of $\partial \mathcal{H}_p$. Therefore, it suffice of prove that the restriction of the level function on $\partial \mathcal{H}_p$ is bounded. 

Consider the Klein model of the hyperbolic plane modeled on the disc $$D=\{(x,y,1) \; \; x^{2}+y^{2} < 1\}$$ Recall that we get such a model by scaling each point of the hyperboloid $\H^{2}_1$ to be in $D$. Consider now the map $\varphi : \mathcal{T} \rightarrow D$ that associates to each point $q\in \mathcal{T}$, the point $\varphi(q)$ defined as the intersection of $D$ and the line $L_q$ passing through $q$ and the origin. It is readily seen that $$ \forall (x,y,z)\in \mathcal{T} \;\;\;\; \varphi (x,y,z)=(\frac{x}{z},\frac{y}{z},1)$$ 
Euclidean segments are mapped by $\varphi$ to Euclidean segments in $D$ (or to a point if the segment lies inside a line passing through the origin).

We consider now the maps $\varphi_0$ (respectively $\varphi_1$) restriction of $\varphi$ on $\partial \mathcal{H}_p$ (respectively on $\H^{2}_1$). The map $\varphi_1$ is an isometry between the hyperboloid model and Klein's model. We will admit all the results concerning this isometry (see \cite{iversen1992hyperbolic} or \cite{thurston1997three} for more details). We admit for now that the map $\varphi_0$ is a homeomorphism (we will prove this fact below, see lemma \ref{lm}).

The map $\varphi_0$ is $\Gamma$-equivariant, and since the action of $\Gamma$ on $D$ is cocompact, hence its action on $\partial \mathcal{H}_p$ is cocompact. Restricting the level function $\mathcal{T}\cong \H^{2}\times \R_{>0} \rightarrow \R_{>0} $ on $\partial \mathcal{H}_p$, we find by continuity, $\alpha_p > 1$ such that $\partial \mathcal{H}_p$ is in the past of $\H^{2}_{\alpha_p}$. Let's take $\alpha_p$ to be minimal for this property. 

 We need now to prove that we can find a uniform $\alpha$ such that $\alpha_p \le \alpha $ for any $p\in \H^{2}_1$. Fix a fundamental polygon $P\subset \H^{2}_1$ for the action of $\Gamma$ that contains $(0,0,1)$ and let $\widehat{P}$ be its image in $D$ under $\varphi_1$. Since $\Gamma$ is cocompact, both $P$ and $\widehat{P}$ are compact. Moreover, we have that for any $p\in \widehat{P}$ and any $q\in \widehat{P}$, we can find $\gamma_1,\gamma_2,\gamma_3 \in \Gamma$, such that $q$ lies in the interior of the Euclidean triangle with vertices $\gamma_i . p$, and moreover the vertices can be taken in the closed ball for the hyperbolic metric $B\subset D$ of center $0$ and radius $n . diam(P)$ for some $n\geq 1$. This implies by definition of $\varphi_0$ that $\varphi_0^{-1}(P)\subset Conv(\varphi_1^{-1}(B))$, where $Conv(\varphi_1^{-1}(B))$ is the convex hull of $\varphi_1^{-1}(B)$. By Caratheodory's theorem, $Conv(\varphi_1^{-1}(B))$ is compact since $\varphi_1^{-1}(B)$ is. Restricting the level function to $Conv(\varphi_1^{-1}(B))$, we find the uniform bound $\alpha$.
\end{proof}
\begin{lemma}\label{lm}
    The map $\varphi_0$ is a homeomorphism.
\end{lemma}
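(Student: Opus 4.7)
The plan is to establish $\varphi_0$ as a continuous bijection between $\partial\mathcal{H}_p$ and $D$, and then upgrade this to a homeomorphism via invariance of domain. Continuity of $\varphi_0$ is immediate, since $\varphi:(x,y,z)\mapsto (x/z,y/z,1)$ is smooth on $\mathcal{T}$. The substantive content is bijectivity, which I would reformulate as: for every $q\in D$, the ray $R_q=\{tq : t>0\}$ meets $\partial\mathcal{H}_p$ in exactly one point.

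For surjectivity, I would recycle the cocompactness argument already sketched in the proof of Proposition \ref{pr1}. Given $q\in D$, one finds $\gamma_1,\gamma_2,\gamma_3\in\Gamma$ such that $q$ lies in the interior of the Euclidean triangle in $D$ with vertices $\varphi(\gamma_i\cdot p)$. The Euclidean triangle in $\R^3$ with vertices $\gamma_i\cdot p$ is contained in $\mathcal{H}_p$ and projects under $\varphi$ onto that triangle in $D$, so $R_q$ must pierce it; the piercing point lies in $R_q\cap\mathcal{H}_p$ and gives a candidate preimage on $\partial\mathcal{H}_p$.

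For injectivity, I would show that $R_q\cap\mathcal{H}_p$ is always of the form $[t_0(q),+\infty)$, so that $R_q$ meets $\partial\mathcal{H}_p$ only at $t_0(q)q$. Closedness of the intersection is automatic, and since $\mathcal{H}_p\subset\{z\geq 1\}$ the lower endpoint $t_0(q)$ is strictly positive. The crucial point is upward-unboundedness: for any height $T$, one should be able to choose orbit points $\gamma_i\cdot p$ whose $\varphi$-images still triangulate $q$ in $D$ but whose $z$-coordinates are all $\geq T$. This combines two facts about the cocompact lattice $\Gamma$: its limit set on $\partial D$ is the full boundary circle, so $\varphi(\Gamma\cdot p)$ accumulates at every boundary point; and on $\H^2_1$ approaching $\partial D$ under $\varphi_1$ is equivalent to having $z\to +\infty$. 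Picking three such $\gamma_i$ near well-separated boundary points surrounding $q$ then produces a Euclidean triangle in $\R^3$ at height $\geq T$ projecting onto a triangle of $D$ containing $q$; hence $R_q\cap\mathcal{H}_p$ meets every height above $T$.

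With bijectivity in hand, the final step is continuity of $\varphi_0^{-1}$, which I would deduce by invariance of domain. The convex hull $\mathcal{H}_p$ is $3$-dimensional, since $\Gamma\cdot p$ is not contained in any affine plane (a two-dimensional affine subspace meets the strictly convex surface $\H^2_1$ in a curve, which cannot contain a cocompact orbit), so $\partial\mathcal{H}_p$ is a topological $2$-manifold. The continuous injection $\varphi_0:\partial\mathcal{H}_p\to D$ between $2$-manifolds is then automatically open, hence a homeomorphism onto its image, which is all of $D$ by surjectivity. The main obstacle I anticipate is the upward-unboundedness step, where the dynamics of $\Gamma$ on $\partial D$ must be combined with the convex geometry of $\mathcal{H}_p$; once that is secured, the remaining topological arguments are formal.
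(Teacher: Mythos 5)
Your overall architecture (continuity is immediate; bijectivity reduces to showing each ray $R_q$ meets $\partial\mathcal{H}_p$ exactly once; then upgrade to a homeomorphism) matches the paper's, and your surjectivity and upward-unboundedness arguments are exactly the ones used there. The one genuine gap is in the injectivity step: from ``$R_q\cap\mathcal{H}_p=[t_0(q),+\infty)\cdot q$'' you conclude that $R_q$ meets $\partial\mathcal{H}_p$ only at $t_0(q)q$, but this inference is invalid for a general closed convex set --- the ray could run along a face, so that a whole subray lies in the boundary. (For instance, for $C=\{(x,y,z): z\ge \max(1,2x)\}$ and $q=(\tfrac12,0,1)$, the intersection $R_q\cap C$ is a half-line entirely contained in $\partial C$.) What you must actually show is that every point $tq$ with $t>t_0(q)$ lies in the topological \emph{interior} of $\mathcal{H}_p$. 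The paper does this with a tetrahedron argument, and your own ingredients suffice to reproduce it: take a triangle $\Delta_T$ with vertices in $\Gamma\cdot p$ at height $\ge T$ whose image under $\varphi$ contains $q$ in its interior; the ray pierces the relative interior of $\Delta_T$ at a point $s_Tq$ with $s_T\to\infty$, and any $tq$ with $t_0(q)<t<s_T$ lies on the open segment joining the vertex $t_0(q)q$ to a relative-interior point of the opposite face of the nondegenerate tetrahedron $\mathrm{Conv}(\{t_0(q)q\}\cup\Delta_T)$, hence has all barycentric coordinates positive and lies in $\mathrm{Int}(\mathcal{H}_p)$. With that sentence added, injectivity is complete.

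For the final step you diverge from the paper: the paper proves that $\varphi_0$ is proper (preimages of compact sets are trapped in compact convex hulls of finitely many orbit points) and concludes that a proper continuous bijection onto the locally compact Hausdorff space $D$ is a homeomorphism, whereas you invoke invariance of domain after observing that $\partial\mathcal{H}_p$ is a topological $2$-manifold because $\mathcal{H}_p$ has nonempty interior ($\Gamma\cdot p$ is not coplanar). Both routes are valid; yours is shorter and purely topological, at the cost of having to justify the manifold structure of the boundary of a convex body, which you do correctly.
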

\begin{proof}
    To prove that $\varphi_0$ is bijective, we need to prove that for any point $q\in D$, the half line $L_q$ starting at the origin and passing through $q$ intersects $\partial \mathcal{H}_p$ exactly one time. In order to do this, we need to consider the image $\varphi_1(\Gamma . p) \subset D$. Since $\Gamma$ is cocompact, the limit set of $\Gamma$ is the boundary of $\H^{2}_1$ (see \cite{katok1992fuchsian}) and hence, the boundary of the Klein disc $\partial D = \{(x,y,1) \; \; x^{2}+y^{2} = 1\}$. Therefore, for any point $q\in D$, there exists $\gamma_1, \gamma_2, \gamma_3 \in \Gamma$ such that $q$ is in the interior of the euclidean triangle with vertices $\varphi_1(\gamma_1 . p), \varphi_1(\gamma_2 . p), \varphi_1(\gamma_3 . p)$. This implies that the half line $L_q$ intersects the interior of the euclidean triangle  with vertices $\gamma_1 . p, \gamma_2 . p, \gamma_3 . p$. and hence it intersects $\mathcal{H}_p$. Applying the same argument for a triangle with vertices closer to the boundary (this is always possible since the limit set of $\Gamma$ is the whole boundary $\partial D$) gives us a sequence of points in $L_q \cap \mathcal{H}_p$ converging to infinity (in the half line $L_q$). Since both $L_q$ and $\mathcal{H}_q$ are convex, this implies that as soon as a point $q_1$ is in $L_q\cap \mathcal{H}_p$, the half line starting from $q_1$ with the same direction as $L_q$ is included in $L_q\cap \mathcal{H}_p$. Consider now for the natural order on $L_q$
    $$\widetilde{q}=inf \{q_1\in L_q\cap \mathcal{H}_p \}$$
    Since both $L_q$ and $\mathcal{H}_p$ are closed, this will imply that $\widetilde{q}\in L_q$ and since $\widetilde{q}$ is not in the interior of $L_q\cap \mathcal{H}_p$, it is not in the interior of $\mathcal{H}_p$. Hence, $\widetilde{q}$ lies in the boundary $\partial \mathcal{H}_p$, and therefore, $\varphi_0$ is surjective. Moreover, $\widetilde{q}$ is the only point in $L_q \cap \partial \mathcal{H}_p$, as for any different point $q_1 \in L_q\cap \mathcal{H}_p$, we can find $\gamma_1, \gamma_2, \gamma_3 \in \Gamma$, such that $q_1$ is in the interior of the tetrahedron with vertices $\widetilde{q}, \gamma_1 . p,\gamma_2 . p,\gamma_3 . p$, which implies that $q_1$ is in the interior of $\mathcal{H}_p$. Therefore, $\varphi_0$ is also injective. 
    
     The map $\varphi_0$ is continuous since it is a restriction on a closed set of $\varphi$ which is clearly continuous. We will prove now that $\varphi_0$ is proper: Fix $K\subset D$ compact, hence it is contained in the interior of an Euclidean polygon $P \subset D$ with vertices $q_1,\ldots,q_n$ in $\varphi(\Gamma .p)$, Consider $\widetilde{P}$ to be the closed convex hull of $\varphi_1^{-1}(q_1),\ldots,\varphi_1^{-1}(q_n)$ and $Q$ to be $\varphi_1^{-1}(P)$ which is compact by continuity of $\varphi_1^{-1}$. Any point of $q\in K$ lies in a Euclidean triangle $\Delta$ with vertices $q_{i_1},q_{i_2},q_{i_3}\in \{q_1, \ldots , q_n \}$, and the half line $L_q$ starting from the origin and containing $q$ must intersect $Q\subset \H^{2}_1$ before intersecting $\mathcal{H}_p$, since by convexity $\mathcal{H}_p$ is in the future of $\H^{2}_1$. This implies that $\varphi_0^{-1}(q)$ lies inside the tetrahedron with vertices $\varphi_1^{-1}(q), \varphi_1^{-1}(q_{i_1}),\varphi_1^{-1}(q_{i_2}),\varphi_1^{-1}(q_{i_3})$. Hence $\varphi_0^{-1}(K)\subset C_P$, where $C_P$ is the closed convex hull of $\varphi_1^{-1}(P)$ which is compact, since $P$ is. Therefore, $\varphi_0^{-1}(K)$ is compact.
\end{proof}
 We can proceed now to find the lower bound in the theorem: 
 \begin{proposition}\label{pr2}
     There exists $C^\prime < 1 $ that depends only on $\Gamma$, such that the induced metric by $\widetilde{F}$ seen as a graph of a function $f$ over $\H^{2}$ satisfies $C^\prime h\leq f^{2}h-df^{2} $.
 \end{proposition}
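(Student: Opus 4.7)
The plan is to reduce the matrix inequality to a scalar bound and then derive the latter by a compactness and contradiction argument. Diagonalising $f^{2}h - df^{2}$ with respect to $h$, its two eigenvalues are $f^{2}$ (orthogonal to $\nabla_{h}f$) and $f^{2} - |\nabla_{h}f|_{h}^{2}$ (along $\nabla_{h}f$). Proposition~\ref{pr1} already gives $f \geq 1$ after the rescaling, so it suffices to exhibit a constant $C' > 0$ depending only on $\Gamma$ with
\[
f^{2} - |\nabla_{h}f|_{h}^{2} \;\geq\; C' \qquad \text{on } \H^{2}.
\]
The left-hand side is $\Gamma$-invariant, so it descends to a smooth strictly positive function on the compact surface $S = \H^{2}/\Gamma$; the content of the proposition is that its infimum on $S$ is uniformly bounded below over all $g \in \widetilde{B}(\Sigma)$.

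I would argue by contradiction. If no such $C'$ existed, there would be a sequence $g_{n} \in \widetilde{B}(\Sigma)$ with normalised graph functions $f_{n} : \H^{2} \to [1, \alpha]$ and points $\bar{p}_{n} \in S$ such that $(f_{n}^{2} - |\nabla_{h}f_{n}|_{h}^{2})(\bar{p}_{n}) \to 0$. A subsequence satisfies $\bar{p}_{n} \to \bar{p}_{\infty} \in S$. Switching to Minkowski coordinates, the graph of $f_{n}$ is the graph of a smooth convex $u_{n} : \R^{2} \to \R$ sandwiched between $\sqrt{1 + X^{2} + Y^{2}}$ and $\alpha\sqrt{1 + X^{2} + Y^{2}}$, whose graph in $\R^{2,1}$ is $\Gamma$-invariant. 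Convexity together with the sandwich bounds yields locally uniform Lipschitz estimates, so Arzelà--Ascoli delivers a subsequential local uniform limit $u_{\infty}$ that is still convex, whose graph is still $\Gamma$-invariant, and which still satisfies the same sandwich. By the standard $C^{0}$-stability of subdifferentials for convex functions, the degeneration $(f_{n}^{2} - |\nabla_{h}f_{n}|_{h}^{2})(\bar{p}_{n}) \to 0$ translates in the limit into the existence of a lightlike supporting hyperplane $P$ to the graph of $u_{\infty}$ at the point $q_{\infty}$ lying above $\bar{p}_{\infty}$.

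The main obstacle, where the specific cocompact lattice geometry must really be used, is deriving a contradiction from the existence of such a $P$. My plan is to combine three facts: (i) the $\Gamma$-invariance of the graph provides a null supporting plane $\gamma \cdot P$ at every point of $\Gamma \cdot q_{\infty}$, with outward null normal $\gamma \cdot \xi$ for some $\xi \in \partial_{\infty}\H^{2}$; (ii) since $\Gamma$ is cocompact in $SO^{\circ}(2,1)$, it acts minimally on $\partial_{\infty}\H^{2}$, so the orbit $\Gamma \cdot \xi$ is dense; and (iii) by the normalisation from Proposition~\ref{pr1}, the graph of $u_{\infty}$ is tangent from above to the smooth spacelike hyperboloid $\{Z = \sqrt{1 + X^{2} + Y^{2}}\}$ along the dense orbit $\Gamma \cdot p^{\ast}$, at which the Euclidean slope is strictly less than $1$. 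I expect these two dense families of constraints --- null supports at $\Gamma \cdot q_{\infty}$ on one hand and strictly spacelike tangencies at $\Gamma \cdot p^{\ast}$ on the other --- to force a convexity contradiction within a fixed compact fundamental region. If this direct geometric route proves too delicate, a fallback would be to invoke the Monge--Ampère identity $\kappa(u) = -(u_{xx}u_{yy} - u_{xy}^{2}) / (1 - |\nabla u|^{2})^{2}$ together with the negative-curvature hypothesis and interior elliptic estimates to obtain a uniform $C^{1,\beta}$ bound on the $u_{n}$ for some $\beta \in (0, 1)$, upgrading the $C^{0}$ convergence to $C^{1}_{\mathrm{loc}}$ convergence, which would preserve the strict spacelike inequality in the limit and directly contradict the existence of the null supporting plane.
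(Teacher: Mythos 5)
Your reduction of the statement to the scalar bound $f^{2}-|\nabla_{h}f|_{h}^{2}\geq C'$ is correct (these are indeed the two eigenvalues of $f^{2}h-df^{2}$ relative to $h$, and $f\geq 1$ after the normalisation of Proposition \ref{pr1}), but the proof has a genuine gap exactly at the decisive step. Your compactness argument successfully produces a limiting convex, $\Gamma$-invariant graph $u_{\infty}$ sandwiched between the levels $\H^{2}_{1}$ and $\H^{2}_{\alpha}$ and admitting a lightlike supporting plane $P$; everything after that is a statement of intent ("I expect these \dots to force a convexity contradiction") rather than an argument. The fallback route does not close the gap either: the hypothesis $\kappa<0$ gives only $\det D^{2}u_{n}>0$ with no uniform upper or lower bounds, so there is no uniform ellipticity and no interior $C^{1,\beta}$ estimate to invoke; and even granting $C^{1}_{\mathrm{loc}}$ convergence, a strict inequality such as $|\nabla u_{n}|<1$ is precisely the kind of condition that is \emph{not} preserved in the limit --- the limit would simply have $|\nabla u_{\infty}|=1$ at the bad point, which is the situation you are trying to exclude, so the fallback is circular.

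The contradiction you are missing is an elementary barrier argument, and it is in fact the entire content of the paper's proof: since the graph of the convex function lies above each of its supporting planes and below $\H^{2}_{\alpha}$, no supporting plane at a point of the graph may cross $\H^{2}_{\alpha}$. A direct computation in a vertical section (reduce to the plane $y=0$ by rotational symmetry) shows that a plane through $(0,0,r)$ with $1\leq r\leq\alpha$ and slope $a$ meets $\H^{2}_{\alpha}$ as soon as $a^{2}>1-r^{2}/\alpha^{2}$; in particular every null plane ($a^{2}=1$) through a point between the two levels crosses $\H^{2}_{\alpha}$, which kills your limiting plane $P$ --- no minimality of the boundary action or density of $\Gamma\cdot\xi$ is needed. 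More importantly, this computation applied to each $f_{n}$ individually yields the explicit uniform bound $a^{2}\leq 1-1/\alpha^{2}$ on the slopes of all tangent planes, hence $C'$ depending only on $\alpha$ and therefore only on $\Gamma$. So the correct argument is direct and quantitative, and the compactness/contradiction scaffolding is not only incomplete but unnecessary: once you see the barrier inequality, there is nothing left to extract a subsequence from.
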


\begin{proof}

Let $p=(0,0,r)$, the tangent plane $T_p\H^{2}_r$ is parallel to the $xy$-plane and the normal line $N_p\H^{2}_r$ is parallel to the $z$-axis. We can use proposition \ref{pr1} to restrict $1\leq r \leq \alpha$ (since $Im(f)\subset[1,\alpha]$). The statement of the proposition at the point $p$ can be formulated as follows: 
$$\text{ For some $0<c<1$,} \;\;\; T_p\widetilde{F}(\widetilde{\Sigma}) \text{ lies outside the cone of equation } c(x^{2}+y^{2})-(z-r)^{2}=0. $$
This means that points of $T_p\widetilde{F}(\widetilde{\Sigma})$ verify the inequality 
$$c(x^{2}+y^{2})-(z-r)^{2} > 0.$$

 Now, since $\widetilde{F}(\widetilde{\Sigma}) $ is the graph of a convex function $u$ over $\R^{2}$, we have that the hypergraph of $u$ (convex hull of the graph of $u$) lies entirely above the tangent plane at any point of $\widetilde{F}(\widetilde{\Sigma}) $. Indeed, using the convexity of $u$ (see \cite{boyd2004convex}), If $X\in \R^{2} $, then we have
$$ \forall \;Y \in \R^{2} \;\;\; u(Y) \ge u(X) + \langle grad(u), (Y-X)\rangle $$
where $\langle .,.\rangle$ is the usual scalar product of $\E^{2}$ and $grad(u)$ is the gradient of the function $u$ evaluated at the point $X$. Hence, we have: 
$$\left< \begin{pmatrix}
    -grad(u) \\ 1
\end{pmatrix}, \begin{pmatrix}
    Y-X \\ u(Y)-u(X)
\end{pmatrix} \right> \ge 0$$

Proposition \ref{pr1} above states that the embedding $\widetilde{F}(\widetilde{\Sigma}) $ is in the past of $\H^{2}_\alpha$. In other words, the hypergraph of $u$ contains $\H^{2}_\alpha$. In particular, we have for any $(Y,z)\in \H^{2}_\alpha$
$$u(Y)< z$$
which implies that for any $X\in \R^{2}$ 
\begin{equation}\label{eq}
    \left<\begin{pmatrix}
    -grad(u) \\ 1
\end{pmatrix}, \begin{pmatrix}
    Y-X \\ z-u(X)
\end{pmatrix} \right > > 0
\end{equation}
the inequality is strict by taking $\alpha$ large enough. 
We will prove now that if the tangent plane intersects (transversely) $\H^{2}_\alpha$, then the previous inequality can not hold. 

We will focus on the point $p=(0,0,r)$, take a plane $P$ passing through $p$ and intersecting $\H^{2}_\alpha$. Since $\H^{2}_\alpha$ is fixed under rotations around the $z$-axis, we can assume that there is an intersection point that lies on the plane $y=0$. Taking the intersection of $P$ and $\H^{2}_\alpha$ with the plane $y=0$, we can reduce the work in the plane $y=0$, (see Figure \ref{fig1})
\renewcommand{\thefigure}{1}
\begin{figure}[h!]
\includegraphics[width=8cm]{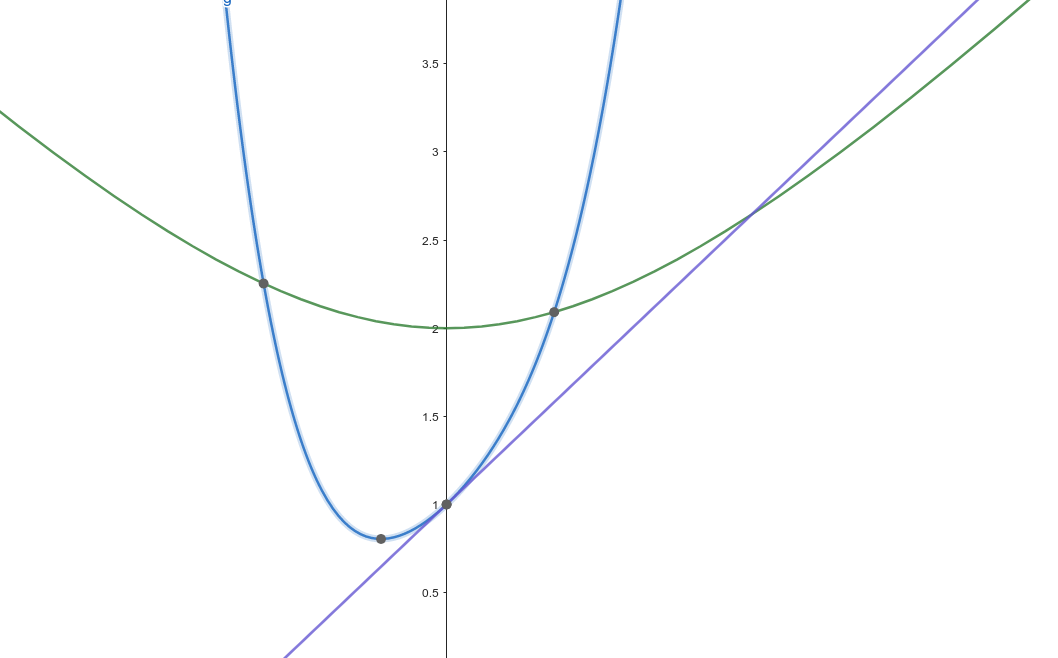}
\centering
\caption{Intersection of the tangent plane (in purple) with the level $\H^{2}_\alpha$ (in green) implies that the graph of $u$ (in blue) is not bounded by $\H^{2}_\alpha$}
\label{fig1}
\end{figure}

Let $l:=P\cap \{y=0\}$ to be a line in the $xz$-plane parametrized by $(t,at+r)$ ($a^{2}<1$ since $P$ is a spacelike plane) passing through $(0,r)$ and consider the curve $\mathcal{C}:=\H^{2}_\alpha \cap \{y=0\}$ (whose equation is $x^{2}-z^{2}=-\alpha^{2}$ with $z>0$). To study the intersection $l\cap \mathcal{C}$, we need to solve the equation 
$$ t^{2} - (at+r)^{2} = - \alpha^{2}$$
This quadratic equation has two distinct solutions for any
$$a^{2}> 1-  \frac{r^{2}}{\alpha^{2}}$$
It is clear now that if 
$$1-  \frac{1}{\alpha^{2}} < a^{2} < 1$$
then we have for any $1\leq r \leq \alpha $, for $t=\frac{ar}{1-a^{2}}$ :
$$t^{2} - (at+r)^{2} + \alpha^{2}  < 0 $$
which geometrically means that $l$ lies above $C$ at this point (see figure \ref{fig1}). Coming back to our problem, the previous fact implies that for equation (\ref{eq})  to hold, $l$ must lie outside the cone of equation $$(1-\frac{1}{\alpha^{2}})x^{2}-(z-r)^{2}=0$$
which equivalently means that the tangent plane $T_p\widetilde{F}(\widetilde{\Sigma}) $ must lie outside the cone of equation $$(1-c(x^{2}+y^{2})-(z-r)^{2}=0$$
Where $c=1-\frac{1}{\alpha^{2}}$, which implies that 
$$C^\prime h_p\leq f^{2}(p)h_p-d_pf^{2} $$
for some $C^\prime$ that depends on $c$.

This generalizes directly to any point. Indeed, for any point $p\in\widetilde{F}(\widetilde{\Sigma}) $ we can find an isometry $\gamma \in SO^{\circ}(2,1)$ that sends $p$ to $(0,0,r)$ for $1\leq r \leq \alpha$. Such isometry preserves the levels $\H^{2}_t$ and sends $T_p\H^{2}_r$ to $T_{(0,0,r)}\H^{2}_r$ and the normal $N_p\H^{2}_r$ to $N_{(0,0,r)}\H^{2}_r$ and clearly preserves the intersection if existed. 

\end{proof}
Combining propositions \ref{pr1} and \ref{pr2} we have Theorem \ref{T2}.
\bibliographystyle{plain}
\bibliography{main.bib}

\end{document}